\newtheorem{remark}{Remark}
\newtheorem{lemma}[remark]{Lemma}
\newtheorem{theorem}[remark]{Theorem}
\newtheorem{proposition}[remark]{Proposition}
\newtheorem{corollary}[remark]{Corollary}
\newcommand{\adim}{\operatorname{\mathrm{adim}}}
\title{The local metric dimension of the lexicographic product of graphs}
\author{G. A. Barrag\'{a}n-Ram\'{i}rez,  A. Estrada-Moreno, Y.  Ram\'{i}rez-Cruz
\\ and J. A. Rodr\'{i}guez-Vel\'{a}zquez\\
{\small Departament d'Enginyeria Inform\`atica i Matem\`atiques,}\\
{\small Universitat Rovira i Virgili,}  {\small Av. Pa\"{\i}sos
Catalans 26, 43007 Tarragona, Spain.} 
}
\begin{document}
\maketitle

\begin{abstract}
The metric dimension is quite a well-studied graph parameter.
Recently, the adjacency  dimension  and the local metric dimension have been introduced and studied.
In this paper, we give a general formula for the local metric dimension of the lexicographic product $G \circ \mathcal{H}$ of a connected graph $G$ of order $n$ and a family  $\mathcal{H}$ composed by $n$ graphs. 
We show that the local metric dimension 
of $G \circ \mathcal{H}$ can be expressed in terms of the true twin equivalence classes of $G$ and the local adjacency dimension of the graphs in 
$\mathcal{H}$. 
\end{abstract}

{\it Keywords:}  Local metric dimension; local adjacency dimension; lexicographic product graphs.

{\it AMS Subject Classification numbers:}   05C12; 05C76

\section{Introduction}
\label{sectIntro}

A metric generator of a metric space $(X,d)$ is a set $S\subset X$ of points in the space  with the property that every point of $X$  is uniquely determined by the distances from the elements of $S$, \textit{i.e}., 
for every $x,y\in X$, there exists $z\in S$ such that $d(x,z)\ne d(y,z)$ \cite{MR0268781}. In this case we say that $z$ \emph{distinguishes} the pair $x,y$.

 Given a simple and connected graph $G=(V,E)$, we consider the function $d_G:V\times V\rightarrow \mathbb{N}\cup \{0\}$, where $d_G(x,y)$ is the length of a shortest path between $u$ and $v$ and $\mathbb{N}$ is the set of positive integers. Then $(V,d_G)$ is a metric space since $d_G$ satisfies $(i)$ $d_G(x,x)=0$  for all $x\in V$,$(ii)$  $d_G(x,y)=d_G(y,x)$  for all $x,y \in V$ and $(iii)$ $d_G(x,y)\le d_G(x,z)+d_G(z,y)$  for all $x,y,z\in V$. 
A set $S\subset V$ is said to be a \emph{metric generator} for $G$ if any pair of vertices of $G$ is
distinguished by some element of $S$. A minimum cardinality metric generator is called a \emph{metric basis}, and
its cardinality the \emph{metric dimension} of $G$, denoted by $\dim(G)$.  

The notion of metric dimension of a graph was introduced by Slater in \cite{Slater1975}, where metric generators were called \emph{locating sets}. Harary and Melter independently introduced the same concept in  \cite{Harary1976}, where metric generators were called \emph{resolving sets}. Applications of this invariant to the navigation of robots in networks are discussed in \cite{Khuller1996} and applications to chemistry in \cite{Johnson1993,Johnson1998}.  Several variations of metric generators, including resolving dominating sets \cite{Brigham2003}, independent resolving sets \cite{Chartrand2003}, local metric sets \cite{Okamoto2010}, strong resolving sets \cite{Sebo2004}, adjacency resolving sets  \cite{JanOmo2012}, $k$-metric/adjacency generators \cite{Estrada-Moreno2014a,Estrada-Moreno2013}, simultaneous (strong) metric generators \cite{EstrGarcRamRodr2015,Ramirez-Cruz-Rodriguez-Velazquez_2014}, etc., have since been introduced and studied. 

 A set $S$ of vertices
in a connected graph $G$ is a \emph{local metric generator} for $G$ (also called local metric set for $G$ \cite{Okamoto2010}) if every two adjacent vertices of $G$ are distinguished by some vertex
of $S$. A minimum local metric generator is called a \emph{local metric
basis} for $G$ and its cardinality, the \emph{local metric dimension} of G, is  denoted by $\dim_l(G)$.

The concept of adjacency generator\footnote{Adjacency generators were called adjacency resolving sets in   \cite{JanOmo2012}.} was introduced by Jannesari and Omoomi in \cite{JanOmo2012} as a tool to study the metric dimension of lexicographic product graphs. A set $S\subset V$ of vertices in a graph $G=(V,E)$ is said to be  an \emph{adjacency generator} for $G$  if for every two  vertices $x,y\in V-S$ there exists $s\in S$ such that $s$ is adjacent to exactly one of $x$ and $y$. A minimum cardinality  adjacency generator is called an  \emph{adjacency basis} of $G$, and its cardinality  the  \emph{adjacency dimension} of $G$,  denoted by  $\adim(G)$ \cite{JanOmo2012}.
The concepts of \emph{local adjacency generator}, \textit{local adjacency basis} and \emph{local adjacency dimension} are defined by analogy, and the 
local adjacency dimension of a graph $G$ is denoted by $\adim_l(G)$. This concept has been studied further by Fernau and Rodr\'{i}guez-Vel\'{a}zquez in \cite{RV-F-2013,MR3218546} where they introduced the study of local adjacency generators and showed
that the (local) metric dimension of the corona product of a graph of order $n$ and some
non-trivial graph $H$ equals $n$ times the (local) adjacency dimension of $H$. As a consequence of this strong relation they showed that the problem of computing the local metric dimension and the (local) adjacency dimension of a graph  is
NP-hard.

As pointed out in \cite{RV-F-2013,MR3218546}, 
any  adjacency generator for a graph $G=(V,E)$ is  also a metric generator in a suitably chosen metric space.
Given a positive integer $t$,  we define the distance function $d_{G,t}:V\times V\rightarrow \mathbb{N}\cup \{0\}$, where
\begin{equation}\label{distinguishAdj}
d_{G,t}(x,y)=\min\{d_G(x,y),t\}.
\end{equation}
Then any metric generator for $(V,d_{G,t})$ is a metric generator for $(V,d_{G,t+1})$ and, as a consequence, the metric dimension of $(V,d_{G,t+1})$  is less than or equal to the metric dimension of $(V,d_{G,t})$. In particular, the metric dimension of $(V,d_{G,1})$ equals $|V|-1$,  the metric dimension of $(V,d_{G,2})$ equals $\adim(G)$ and, if $G$ has diameter $D(G)$, then $d_{G,D(G)}=d_G$ and so  the metric dimension of  $(V,d_{G,D(G)})$  equals $\dim(G)$.
Notice that when using the metric $d_{G,t}$ 
the concept of metric generator needs not be restricted to the case of connected graphs, as for any pair of vertices $x,y$ belonging to different connected components of $G$ we can assume that $d_G(x,y)=+\infty$ and so $d_{G,t}(x,y)=t$. 

Notice that $S$ is an adjacency generator for $G$ if and only if $S$ is an adjacency generator for its complement $\overline{G}$. 
This is justified by the fact that given an adjacency generator $S$ for $G$, it holds that for every $x,y\in V- S$ there exists $s\in S$ such that $s$ is adjacent to exactly one of $x$ and $y$, and this property holds in $\overline{G}$. Thus, $\adim(G)=\adim (\overline{G}).$

From the definitions of the different variants of generators, we can observe: an adjacency generator is a metric generator; a metric generator is a  local metric generator; a local adjacency generator is a local metric generator; and an  adjacency generator is a local adjacency generator.
These facts show that 
the following inequalities hold for any graph $G$:
\begin{enumerate}[{\rm (i)}]
 \item $\dim(G)\leq \adim(G)$;
 \item $\dim_l(G)\leq \dim(G)$;
 \item $\dim_l(G)\leq \adim_l(G)$;
 \item $\adim_l(G)\leq \adim (G)$.
\end{enumerate}
Moreover, if $D(G)\le 2$, then $\dim(G)= \adim(G)$ and $\dim_l(G)=\adim_l(G)$.

The radius of a  graph $G$ is denoted by $r(G)$. The following result describes situations with very small or large local adjacency dimensions.

\begin{theorem}{\rm \cite{RV-F-2013}}\label{LocalAdjDimSmall-Large}
Let $G$ be a non-empty graph of order $t$. The following assertions hold.
\begin{enumerate}[{\rm (i)}]

\item 
$\adim_l(G)=1$ if and only if $G$ is a bipartite graph having only one non-trivial connected component $G^*$ and $r(G^*)\le 2$.
 
 \item $\adim_l( G)= t-1$ if and only if $G\cong K_{t}$.
\end{enumerate}
\end{theorem}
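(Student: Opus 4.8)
The plan is to treat the two characterizations separately; in each case the ``easy'' implication is direct, and the real content lies in extracting structural information from an optimal basis.

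I would dispose of the extremal case (ii) first. Note that $\adim_l(G)\le t-1$ always holds, since leaving a single vertex outside $S$ yields a (vacuously) valid local adjacency generator. For the forward implication, if $G\cong K_t$ then every pair of vertices is adjacent and, crucially, for any three distinct vertices $s,x,y$ the vertex $s$ is adjacent to \emph{both} $x$ and $y$; hence no $s$ distinguishes any pair, so a local adjacency generator cannot leave two vertices outside $S$, forcing $|S|\ge t-1$. The converse is the contrapositive: if $G$ is not complete, pick non-adjacent $u,v$ and set $S=V\setminus\{u,v\}$; the only pair in $V\setminus S$ is $\{u,v\}$, which is not an edge, so $S$ is trivially a local adjacency generator and $\adim_l(G)\le t-2$. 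This part is routine and poses no obstacle.

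For (i), the forward implication is the substantive one. Given a local adjacency basis $\{s\}$, I would put $A=N(s)$ and $B=V\setminus(N(s)\cup\{s\})$ and show both are independent: an edge inside $A$ (respectively inside $B$) is not incident to $s$ yet has both endpoints adjacent to $s$ (respectively neither adjacent to $s$), so $s$ fails to distinguish it. Consequently every edge joins $A$ to $\{s\}\cup B$, which is a bipartition, so $G$ is bipartite. The same distinguishing requirement forces every edge to have an endpoint in $N(s)$, hence every edge lies in the component $G^*$ of $s$, so $G^*$ is the unique non-trivial component. Finally, to obtain $r(G^*)\le 2$ I would prove $\operatorname{ecc}_{G^*}(s)\le 2$: were there a $w$ with $d(s,w)\ge 3$, a shortest $s$--$w$ path would contain an edge $\{u_2,u_3\}$ with both endpoints in $B$, contradicting independence of $B$.

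For the converse of (i), assume $G$ is bipartite with unique non-trivial component $G^*$ and $r(G^*)\le 2$. I would choose $s\in V(G^*)$ with $\operatorname{ecc}_{G^*}(s)\le 2$ and take the bipartition $(X,Y)$ of $G^*$ with $s\in X$. By parity, every vertex of $Y$ lies at odd distance $\le 2$, hence at distance $1$, so $Y\subseteq N(s)$; every vertex of $X\setminus\{s\}$ lies at even distance $\le 2$, hence at distance $2$, so it avoids $N(s)$. Then for any edge $\{x,y\}$ of $G$ (necessarily in $G^*$, with one endpoint in each part) $s$ is adjacent to the $Y$-endpoint but not the $X$-endpoint, so $\{s\}$ distinguishes it; thus $\adim_l(G)\le 1$, with equality since $G$ is non-empty. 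The main obstacle throughout is packaging the single-vertex distinguishing condition of (i) into the three simultaneous conclusions (bipartite, one non-trivial component, radius $\le 2$) and keeping the distance/parity bookkeeping straight in the converse; the remaining verifications are formal.
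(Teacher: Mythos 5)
Your proof is correct. Note that the paper does not prove this theorem at all: it is imported verbatim from the cited reference \cite{RV-F-2013}, so there is no in-paper argument to compare against. Your argument is the natural one and is complete: in (ii) the key observation that a vertex of $K_t$ is adjacent to both members of any other pair, together with the trivial upper bound $t-1$ and the non-adjacent-pair construction for incomplete graphs, settles both directions; in (i) the decomposition $V=\{s\}\cup N(s)\cup B$ with $N(s)$ and $B$ independent correctly yields bipartiteness, the fact that every edge meets $N(s)$ confines all edges to the component of $s$, and the parity bookkeeping in the converse (every vertex of the opposite part at distance $1$, every other vertex of the same part at distance $2$) shows $\{s\}$ distinguishes every edge avoiding $s$. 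The only point worth making explicit is that $\adim_l(G)\ge 1$ for non-empty $G$ (the empty set fails on any edge), which you use implicitly at the end of the converse of (i).
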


The remainder of the paper is structured as follows. After introducing some useful notation and terminology in Section~\ref{sectPrelim}, we extensively discuss our main results in Section~\ref{sectMain}. Then, Section~\ref{sectAdjvsK1} is devoted to show how all previous results presented in Section~\ref{sectMain} in terms of the local adjacency dimension can be expressed in terms of the local metric dimension of graphs of the form $K_1+H$. Finally, we show in Section~\ref{sectAdjProduct} that the methodology used for studying the local metric dimension can be applied to the case of the local adjacency dimension of lexicographic product graphs. In particular, we discuss when the values of both dimensions coincide.

\section{Preliminary concepts}\label{sectPrelim}
Throughout the paper, we will use the notation $K_n$, $K_{r,n-r}$, $C_n$, $N_n$ and $P_n$ for complete graphs, complete bipartite graphs, cycle graphs, empty graphs and path graphs of order $n$, respectively. 

We use the notation $u \sim v$ if $u$ and $v$ are adjacent and $G \cong H$ if $G$ and $H$ are isomorphic graphs. For a vertex $v$ of a graph $G$, $N_G(v)$ will denote the set of neighbours or \emph{open neighbourhood} of $v$ in $G$, \emph{i.e.} $N_G(v)=\{u \in V(G):\; u \sim v\}$. 

The \emph{closed neighbourhood} of $v$, denoted by $N_G[v]$, equals $N_G(v) \cup \{v\}$. If there is no ambiguity, we will simply write $N(v)$ or $N[v]$. Two vertices $x,y\in V(G)$  are \textit{true twins} in $G$ if $N_G[x]=N_G[y]$. 

 The subgraph of $G$ induced by a set $S$ of vertices is denoted  by $\langle S\rangle_G$. If there is no ambiguity, we will simply write $\langle S\rangle$.  The length of a shortest cycle (if any) in a graph $G$ is called the girth of $G$, and it is denoted by $\mathtt{g}(G)$. Acyclic graphs are considered to have infinite girth.

From now on we denote by $\mathcal{G}$ the set of graphs $H$ satisfying  that for every local adjacency basis $B$, there exists $v\in V(H)$ such that $B\subseteq N_H(v)$. Notice that  the only local adjacency basis of an empty graph $N_r$ is the empty set, and so  $N_r\in \mathcal{G}$. Moreover, $K_1\cup K_2\in \mathcal{G}$. In fact, a non-connected graph $H\in \mathcal{G}$ if and only if $H\cong N_r$ or $H\cong N_r\cup G$, where $G$ is a connected graph in $\mathcal{G}$. We denote by $\Phi$ the family of empty graphs. Notice that  $  \Phi\subset \mathcal{G}$.
On the other hand, it is readily seen that no graph of radius greater than or equal to four  belongs to  $\mathcal{G}$.  As we will see in Proposition 
\ref{lemmaGirth}, if $H\in \mathcal{G}$  is  a connected  graph different from a tree, then $\mathtt{g}(H)\le 6$.

\subsection{The lexicographic product $G \circ \mathcal{H}$}

Let $G$ be a graph of order $n$, and let $\mathcal{H}=\{H_1,H_2,\ldots,H_n\}$ be an ordered family composed by $n$ graphs. The \emph{lexicographic product} of $G$ and $\mathcal{H}$ is the graph $G \circ \mathcal{H}$, such that $V(G \circ \mathcal{H})=\bigcup_{u_i \in V(G)} (\{u_i\} \times V(H_i))$ and $(u_i,v_r)(u_j,v_s) \in E(G \circ \mathcal{H})$ if and only if $u_iu_j \in E(G)$ or $i=j$ and $v_rv_s \in E(H_i)$. 
Figure \ref{figExampleOfLexiFamily} shows the lexicographic product of $P_3$ and the family composed by $\{P_4,K_2,P_3\}$, and the lexicographic product of $P_4$ and the family $\{H_1,H_2,H_3,H_4\}$, where $H_1 \cong H_4 \cong K_1$ and $H_2 \cong H_3 \cong K_2$. In general, we can construct the graph $G\circ\mathcal{H}$ by taking one copy of each $H_i\in\mathcal{H}$ and joining by an edge every vertex of $H_i$ with every vertex of $H_j$ for every $u_i u_j\in E(G)$. Note that $G\circ\mathcal{H}$ is connected if and only if $G$ is connected.

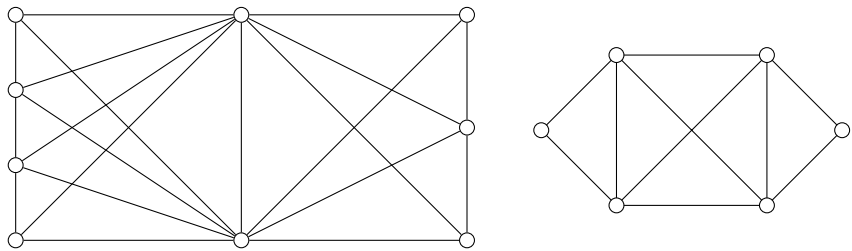
\begin{figure}[!ht]
\centering
\begin{tikzpicture}[transform shape, inner sep = .7mm]
\pgfmathsetmacro{\espacio}{1};
\node [draw=black, shape=circle, fill=white] (v5) at (3,0) {};
\node [draw=black, shape=circle, fill=white] (v6) at (3,3*\espacio) {};
\draw[black] (v5) -- (v6);
\foreach \ind in {1,...,4}
{
\pgfmathsetmacro{\yc}{(\ind-1)*\espacio};
\node [draw=black, shape=circle, fill=white] (v\ind) at (0,\yc) {};
\draw[black] (v5) -- (v\ind);
\draw[black] (v6) -- (v\ind);
\ifthenelse{\ind>1}
{
\pgfmathtruncatemacro{\bind}{\ind-1};
\draw[black] (v\ind) -- (v\bind);
}
{};
}
\node [draw=black, shape=circle, fill=white] (v7) at (6,0) {};
\node [draw=black, shape=circle, fill=white] (v8) at (6,1.5*\espacio) {};
\node [draw=black, shape=circle, fill=white] (v9) at (6,3*\espacio) {};
\draw[black] (v7) -- (v8) -- (v9);
\foreach \ind in {7,...,9}
{
\draw[black] (v5) -- (v\ind);
\draw[black] (v6) -- (v\ind);
}
\end{tikzpicture}
\hspace{0.5cm}
\begin{tikzpicture}[transform shape, inner sep = .7mm]

\draw(-1,-1) -- (1,1)--(1,-1)--(-1,1)--(-2,0)--(-1,-1)--(1,-1)--(2,0)--(1,1)--(-1,1)--(-1,-1);
\node  at  (0,-1.5) {};

\node [draw=black, shape=circle, fill=white] at  (-1,-1) {};
\node [draw=black, shape=circle, fill=white] at  (-1,1) {};
\node [draw=black, shape=circle, fill=white] at  (1,-1) {};
\node [draw=black, shape=circle, fill=white] at  (1,1) {};
\node [draw=black, shape=circle, fill=white] at  (-2,0) {};
\node [draw=black, shape=circle, fill=white] at  (2,0) {};

\end{tikzpicture} 
\caption{The lexicographic product graphs $P_3\circ\{P_4,K_2,P_3\}$ and $P_4\circ\{H_1,H_2,H_3,H_4\}$, where $H_1 \cong H_4 \cong K_1$ and $H_2 \cong H_3 \cong K_2$.}\label{figExampleOfLexiFamily}
\end{figure}

In particular, if $H_i \cong H$ for every $H_i \in \mathcal{H}$, then $G \circ \mathcal{H}$  is a standard lexicographic product graph, 
 which is denoted as $G \circ H$ for simplicity.
Another particular case of lexicographic product graphs is the join graph. The \emph{join graph} $G+H$\label{g join} is defined as the graph obtained from disjoint graphs $G$ and $H$ by taking one copy of $G$ and one copy of $H$ and joining by an edge each vertex of $G$ with each vertex of $H$ \cite{Harary1969,Zykov1949}. Note that $G+H\cong K_2\circ\{G,H\}$. The join operation is commutative and associative. Now, for the sake of completeness, Figure \ref{ex join} illustrates two examples of join graphs.

\begin{figure}[h]
\centering
\begin{tabular}{cccccc}
\begin{tikzpicture}[transform shape, inner sep = .7mm]

\draw(0,0) -- (0,3);
\draw(1.5,1.5) -- (3,0.5) -- (3,2.5) -- cycle;
\draw(1.5,1.5) -- (0,0);
\draw(3,0.5) -- (0,0);
\draw(3,2.5) -- (0,0);
\draw(1.5,1.5) -- (0,1);
\draw(3,0.5) -- (0,1);
\draw(3,2.5) -- (0,1);
\draw(1.5,1.5) -- (0,2);
\draw(3,0.5) -- (0,2);
\draw(3,2.5) -- (0,2);
\draw(1.5,1.5) -- (0,3);
\draw(3,0.5) -- (0,3);
\draw(3,2.5) -- (0,3);

\node [draw=black, shape=circle, fill=white] at  (0,0) {};
\node [draw=black, shape=circle, fill=white] at  (0,1) {};
\node [draw=black, shape=circle, fill=white] at  (0,2) {};
\node [draw=black, shape=circle, fill=white] at  (0,3) {};
\node [draw=black, shape=circle, fill=white] at  (1.5,1.5) {};
\node [draw=black, shape=circle, fill=white] at  (3,0.5) {};
\node [draw=black, shape=circle, fill=white] at  (3,2.5) {};

\end{tikzpicture} & & \hspace*{0.7cm} & &
\begin{tikzpicture}[transform shape, inner sep = .7mm]
\draw(0,0) -- (4,0);
\draw(0,1.5) -- (4,1.5);
\draw(0,0) -- (4,1.5);
\draw(0,1.5) -- (4,0);
\draw(1.2,3) -- (0,0);
\draw(1.2,3) -- (0,1.5);
\draw(1.2,3) -- (4,0);
\draw(1.2,3) -- (4,1.5);
\draw(2.8,3) -- (0,0);
\draw(2.8,3) -- (0,1.5);
\draw(2.8,3) -- (4,0);
\draw(2.8,3) -- (4,1.5);

\node [draw=black, shape=circle, fill=white] at  (0,0) {};
\node [draw=black, shape=circle, fill=white] at  (0,1.5) {};
\node [draw=black, shape=circle, fill=white] at  (4,0) {};
\node [draw=black, shape=circle, fill=white] at  (4,1.5) {};
\node [draw=black, shape=circle, fill=white] at  (1.2,3) {};
\node [draw=black, shape=circle, fill=white] at  (2.8,3) {};

\end{tikzpicture} \\
\end{tabular}
\caption{Two join graphs:  $P_4+C_3 \cong K_2\circ \{P_4,C_3\}$ and $N_2+N_2+N_2 \cong K_3\circ N_2$.}
\label{ex join}
\end{figure}
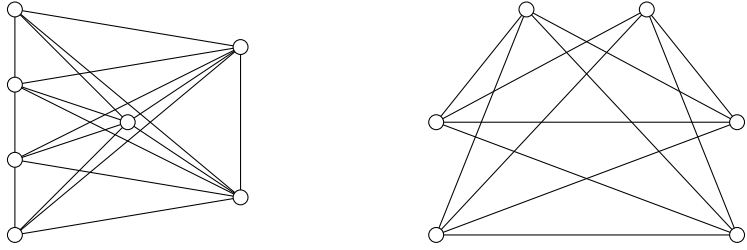

Moreover, complete $k$-partite graphs,  $K_{p_1,p_2,...,p_k}\cong K_n \circ \{N_{p_1},N_{p_2},\dots ,N_{p_k}\}\cong N_{p_1}+N_{p_2}+\cdots +N_{p_k}$, are typical examples of join graphs. The particular case   illustrated in Figure \ref{ex join} (right hand side), is no other than the complete $3$-partite graph $K_{2,2,2}$.

The relation between distances in a lexicographic product graph and those in its factors is presented in the following remark, for which it is necessary to recall (\ref{distinguishAdj}).

\begin{remark}\label{remarkDistLexi}
 If $G$ is a connected graph and $(u_i,b)$ and $(u_j,d)$ are vertices of $G\circ \mathcal{H}$, then
$$d_{G\circ \mathcal{H}}((u_i,b),(u_j,d))=\left\{\begin{array}{ll}

                            d_G(u_i,u_j), & \mbox{if $i\ne j$,} \\
                            \\
                           d_{H_i,2}(b,d), & \mbox{if $i=j$.}
                            \end{array} \right.\\
$$
\end{remark}
We would point out that the remark above was stated in  \cite{Hammack2011,Imrich2000} for the case where $H_i\cong H$ for all $H_i\in \mathcal{H}$.

The lexicographic product has been studied from different points of view in the literature. One of the most common researches focuses on finding relationships between the value of some invariant in the product and that of its factors. In this sense, we can find in the literature a large number of investigations on diverse topics. For instance, the metric dimension and related parameters have been studied in \cite{Estrada-Moreno2014b,Feng2012a, JanOmo2012, Kuziak2014, Ramirez_Estrada_Rodriguez_2015, Saputro2013}. For more information on product graphs we suggest the books \cite{Hammack2011,Imrich2000}.
  
  In order to state our main result (Theorem \ref{DimLocalLexicInTermsAdjaLoc}) we need to introduce some additional notation.  
Let $\mathcal{U}=\{U_1,U_2,\dots , U_k\}$ be the set of non-singleton true twin equivalence classes of a graph $G$. For the remainder of this paper we will assume that $G$ is connected and has order $n\ge 2$, and $\mathcal{H}=\{H_1,\ldots,H_n\}$. We now define the following sets and parameters:

\begin{itemize}
\item $T(G)=\bigcup_{j=1}^kU_j$.
\item $V_E=\left\{u_i\in V(G)-T(G) :\, H_i \in \Phi \right\}$.
\item $I=\{u_i\in V(G):\, H_i\in \mathcal{G}\}$.
\item For any $I_j=I\cap U_j\ne \emptyset $, we can choose some  $u\in I_j$ and set  $I'_j=I_j-\{u\}$. We define the set $X_E=I-\bigcup_{I_j'\ne \emptyset}I_j'$.
\item We say that two vertices $u_i,u_j \in X_E$ satisfy the relation $\mathcal{R}$ if and only if $u_i \sim u_j$ and $d_G(u,u_i)=d_G(u,u_j)$ for all $u\in V(G)-(V_E \cup \{u_i,u_j\})$.
\item We define $\mathcal{A}$ as the family of sets $A\subseteq X_E$  such that for every pair of vertices $u_i,u_j\in X_E$ satisfying $\mathcal{R}$ there exists a vertex in $A$ that distinguishes them.
\item $\varrho(G,\mathcal{H})=\displaystyle\min_{A\in \mathcal{A}}\left\{|A|\right\}.$  
\end{itemize}


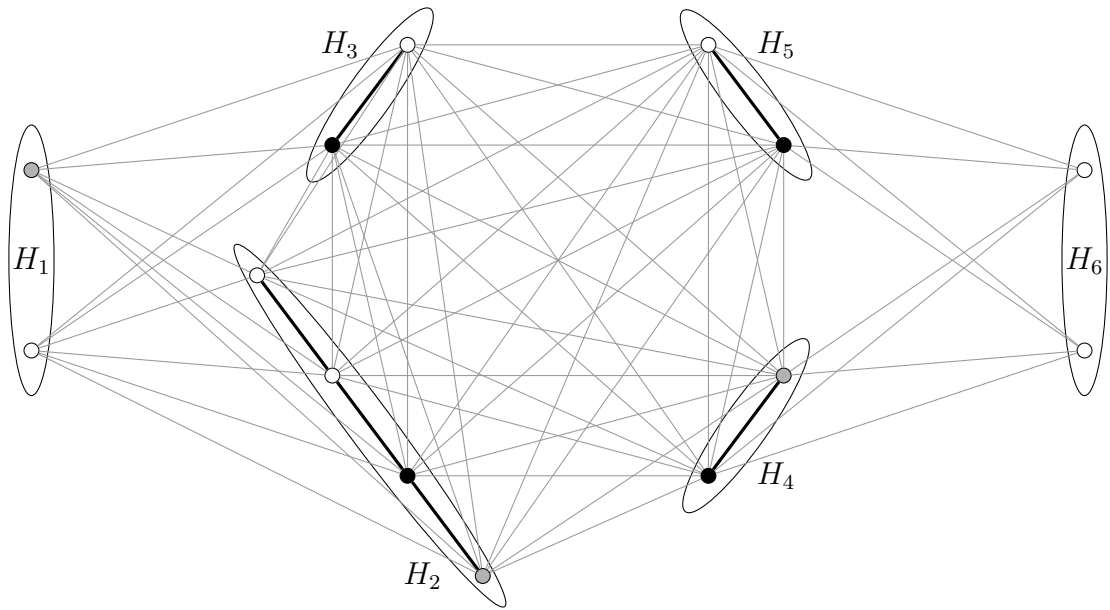
\begin{figure}[h]
\centering
\begin{tikzpicture}[transform shape, inner sep = .7mm]

\node [draw=black, shape=circle, fill=black!30] (21) at  (-1,-4.2) {};  
\node [draw=black, shape=circle, fill=black] (22) at  (-2,-4.2+4/3) {}; 
\node [draw=black, shape=circle, fill=white] (23) at  (-3,-4.2+8/3) {};
\node [draw=black, shape=circle, fill=white] (24) at  (-4,-0.2) {};

\node [draw=black, shape=circle, fill=white] (31) at  (-2,4.2-4/3) {}; 
\node [draw=black, shape=circle, fill=black] (32) at  (-3,4.2-8/3) {};

\node [draw=black, shape=circle, fill=white] (51) at  (2,4.2-4/3) {}; 
\node [draw=black, shape=circle, fill=black] (52) at  (3,4.2-8/3) {};

\node [draw=black, shape=circle, fill=black] (41) at  (2,-4.2+4/3) {}; 
\node [draw=black, shape=circle, fill=black!30] (42) at  (3,-4.2+8/3) {};

\node [draw=black, shape=circle, fill=white] (11) at  (-7,-1.2) {}; 
\node [draw=black, shape=circle, fill=black!30] (12) at  (-7,1.2) {};

\node [draw=black, shape=circle, fill=white] (61) at  (7,-1.2) {}; 
\node [draw=black, shape=circle, fill=white] (62) at  (7,1.2) {};

\draw[line width=1.2pt] (21) -- (22)  -- (23) -- (24);
\pgfmathsetmacro{\angle}{atan(-4/3.2)+88};
\draw[rotate around={\angle:(-5/2,-2.2)}] (-5/2,-2.2) ellipse (0.3 and 3);

\pgfmathsetmacro{\angle}{atan(4.6/3)+88};
\draw[rotate around={\angle:(5/2,-13.2/6)}] (5/2,-13.2/6) ellipse (0.3 and 1.4);
\draw[rotate around={\angle:(-5/2,13.2/6)}] (-5/2,13.2/6) ellipse (0.3 and 1.4);
\pgfmathsetmacro{\angle}{atan(4.6/3)+160};
\draw[rotate around={\angle:(5/2,13.2/6)}] (5/2,13.2/6) ellipse (0.3 and 1.4);

\draw (-7,0) ellipse (0.3 and 1.8);
\draw (7,0) ellipse (0.3 and 1.8);
\node at  (-7,0) {$H_1$};
\node at  (7,0) {$H_6$};
\node at  (-1.8,-4.2) {$H_2$}; 
\node at  (-2.9,4.2-4/3) {$H_3$};
\node at  (2.9,4.2-4/3) {$H_5$};
\node at  (2.9,-4.2+4/3) {$H_4$};

\foreach \ind in {3,...,5}
{
\pgfmathtruncatemacro{\x}{1};
\pgfmathtruncatemacro{\y}{2};
\draw[line width=1.2pt] (\ind\x) -- (\ind\y);
}

\foreach \ind in {1,4,5}
{
\foreach \indy in {1,2}
{
\foreach \x in {1,2}
{
\draw[black!40] (3\x) -- (\ind\indy);
}
}
}

\foreach \ind in {5,6}
{
\foreach \indy in {1,2}
{
\foreach \x in {1,2}
{
\draw[black!40] (4\x) -- (\ind\indy);
}
}
}

\foreach \indy in {1,2}
{
\foreach \x in {1,2}
{
\draw[black!40] (5\x) -- (6\indy);
}
}

\foreach \ind in {1,3,4,5}
{
\foreach \indy in {1,2}
{
\foreach \x in {1,...,4}
{
\draw[black!40] (2\x) -- (\ind\indy);
}
}
}
\end{tikzpicture} 
\caption{The graph $G\circ{\cal H}$, where $G$ is the right-hand graph shown in Figure \ref{figExampleOfLexiFamily} and $\mathcal{H}$ is the family composed by the graphs $H_1\cong H_6\cong N_2$, $H_2\cong P_4$, $H_3\cong H_4\cong H_5\cong K_2$. The set of black- and grey-coloured vertices is a local metric basis of $G\circ{\cal H}$.}
\label{Ex notation Lexicografico}
\end{figure}

With the aim of clarifying what  this notation means, we proceed to show an example where we explain the role of these parameters when constructing a local metric generator $W$ for a lexicographic product graph. Let $G$ be the right-hand graph shown in Figure \ref{figExampleOfLexiFamily} and let $\mathcal{H}$ be the family composed by the graphs $H_1\cong H_6\cong N_2$, $H_2\cong P_4$, $H_3\cong H_4\cong H_5\cong K_2$. Figure \ref{Ex notation Lexicografico} shows the graph $G\circ{\cal H}$. Consider any $H_i\notin\Phi$. Note that the restriction of any local metric basis of $G\circ{\cal H}$ to the vertices of $\langle \{u_i\} \times V(H_i) \rangle \cong H_i$ must be a local adjacency generator for $\langle \{u_i\} \times V(H_i) \rangle$, as two adjacent vertices of $\langle \{u_i\} \times V(H_i) \rangle$ are not distinguished by any vertex outside ${u_i}\times V(H_i)$, so we can assume that the black-coloured vertices belong to $W$. Moreover, $U_1=\{u_2,u_3\}$ and $U_2=\{u_4,u_5\}$ are the non-singleton true twin equivalence classes of $G$. Since $u_4,u_5 \in I\cap U_2$, we have that no pair of non-black-coloured vertices in $({u_4}\times V(H_4))\cup({u_5}\times V(H_5))$ is distinguished by any black-coloured vertex, so we add to $W$ the grey-coloured vertex corresponding to the copy of $H_4$ and, by analogy, we add to $W$ the grey-coloured vertex corresponding to the copy of $H_2$. Besides, note that the white-coloured vertices of the copies of $H_3$ and $H_5$ are only distinguished by themselves and by vertices from the copies of $H_1$ and $H_6$, so we need to add one more vertex to $W$, e.g. the grey-coloured vertex in the copy of $H_1$. Note that, according to our previous definitions, we have $V_E=\{u_1,u_6\}$   and we take $I'_1= \{u_2\}$ and $I'_2=\{u_4\}$. Thus, $X_E=\{u_1,u_3,u_5,u_6\}$. Therefore, since  $u_1\in X_E$ distinguishes the pair $u_3,u_5$, the sole pair of vertices from $X_E$ satisfying $\mathcal{R}$, we take $A=\{u_1\}$ and conclude that $\varrho(G,\mathcal{H})=1$. Notice that, $\displaystyle\sum_{i=1}^{6}\adim_{l}(H_i)=4$, $\displaystyle\sum_{I\cap U_j\ne \emptyset}(|I\cap U_j|-1)=2$ and 
$\dim_l(G\circ\mathcal{H})=\displaystyle\sum_{i=1}^{6}\adim_{l}(H_i)+
\sum_{I\cap U_j\ne \emptyset}(|I\cap U_j|-1)+\varrho(G,\mathcal{H})=7.$


\section{Main results}\label{sectMain}

\begin{theorem}\label{DimLocalLexicInTermsAdjaLoc}
Let  $G$ be a connected graph     of order $n\ge 2$, let $\{U_1,U_2,\dots , U_k\}$ be the set of non-singleton true twin equivalence classes of $G$ and let $\mathcal{H}=\{H_1,\ldots,H_n\}$ be a family     of graphs. 
Then
$$\dim_l(G\circ\mathcal{H})=\displaystyle\sum_{i=1}^{n}\adim_{l}(H_i)+
\sum_{I\cap U_j\ne \emptyset}(|I\cap U_j|-1)+\varrho(G,\mathcal{H}).$$
\end{theorem}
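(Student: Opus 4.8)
The plan is to prove the equality by establishing the right-hand side as both an upper and a lower bound for $\dim_l(G\circ\mathcal{H})$, throughout exploiting the distance formula of Remark~\ref{remarkDistLexi}. For a set $W\subseteq V(G\circ\mathcal{H})$ and each $i$ write $S_i=\{v\in V(H_i):(u_i,v)\in W\}$ for the restriction of $W$ to the $i$-th copy, so that $|W|=\sum_{i=1}^n|S_i|$. The first step, which I would isolate as a lemma, is that external vertices are useless inside a copy: by Remark~\ref{remarkDistLexi}, for $l\ne i$ the vertex $(u_l,z)$ is equidistant from $(u_i,b)$ and $(u_i,d)$, so two adjacent vertices of the copy $\langle\{u_i\}\times V(H_i)\rangle\cong H_i$ can only be separated from within that copy. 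A short computation with $d_{H_i,2}$ then shows that $W$ distinguishes every edge lying inside the $i$-th copy if and only if $S_i$ is a local adjacency generator of $H_i$. This forces $|S_i|\ge\adim_l(H_i)$, accounting for the term $\sum_{i=1}^n\adim_l(H_i)$; in the construction I would begin by placing a local adjacency basis in each copy.

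The second step handles adjacent vertices lying in different copies. Again by Remark~\ref{remarkDistLexi}, a vertex $(u_l,z)$ with $l\notin\{i,j\}$ separates $(u_i,b)$ from $(u_j,d)$ if and only if $d_G(u_l,u_i)\ne d_G(u_l,u_j)$, while for $(u_i,b),(u_j,d)\notin W$ some vertex of the $i$-th copy separates the pair if and only if $S_i\not\subseteq N_{H_i}(b)$. The crucial configuration is a non-singleton true twin class $U_j$: if $u_i,u_{i'}\in U_j$ then $d_G(u_l,u_i)=d_G(u_l,u_{i'})$ for every $l\notin\{i,i'\}$, so no external copy can separate a cross pair, and $(u_i,b),(u_{i'},d)$ is separated if and only if $S_i\not\subseteq N_{H_i}(b)$ or $S_{i'}\not\subseteq N_{H_{i'}}(d)$. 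This is exactly where membership in $\mathcal{G}$ enters: a restriction fails to separate some cross pair precisely when it is contained in an open neighbourhood of its copy, and for $H_i\in\mathcal{G}$ every local adjacency basis suffers this defect while for $H_i\notin\mathcal{G}$ some basis does not. I would prove that a generator of $H_i$ not contained in any open neighbourhood exists of size $\adim_l(H_i)$ when $H_i\notin\mathcal{G}$ and of size $\adim_l(H_i)+1$ when $H_i\in\mathcal{G}$. Hence inside each class $U_j$ one may keep a single $\mathcal{G}$-copy with a minimal (neighbourhood-contained) basis while every other $\mathcal{G}$-copy is enlarged by one vertex, producing the contribution $\sum_{I\cap U_j\ne\emptyset}(|I\cap U_j|-1)$; conversely, two $\mathcal{G}$-copies of the same class both using minimal bases always leave a cross pair unseparated, so no fewer vertices work.

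The third step isolates the term $\varrho(G,\mathcal{H})$, the cost of the cross pairs surviving the previous analysis. After placing neighbourhood-escaping generators in every non-$\mathcal{G}$ copy and in all but one $\mathcal{G}$-copy per true twin class, the copies whose restriction may still lie in an open neighbourhood are exactly those indexed by $X_E$, and a pair $(u_i,b),(u_j,d)$ with $u_i,u_j\in X_E$ stays unseparated precisely when $u_i\sim u_j$ and no copy outside $V_E\cup\{u_i,u_j\}$ separates $u_i$ from $u_j$ by distance, that is, when $u_i,u_j$ satisfy $\mathcal{R}$. Separating such a pair forces us to add a generator vertex in some copy $u_l$ with $d_G(u_l,u_i)\ne d_G(u_l,u_j)$ (allowing $u_l\in\{u_i,u_j\}$, which amounts to making that copy's restriction escape its neighbourhoods); the admissible families of copies to augment are precisely the sets $A\in\mathcal{A}$, and a minimum one costs $\varrho(G,\mathcal{H})$. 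For the upper bound I would assemble $W$ from these three families of vertices and verify, by cases on the type of adjacent pair, that it is a local metric generator; for the lower bound I would show that any local metric generator, after subtracting the forced within-copy and twin-class contributions, induces a member of $\mathcal{A}$, whence its cardinality is at least the claimed sum.

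The main obstacle I anticipate is the lower bound bookkeeping: one must show the three contributions are genuinely additive, i.e. that a single vertex cannot simultaneously discharge two of the roles, and that an optimal generator may be assumed to consist of a local adjacency basis in each copy together with the twin-class and $\mathcal{R}$ corrections. The structural heart of this is the characterization of when a local adjacency generator is contained in an open neighbourhood --- the defining property of the class $\mathcal{G}$ --- together with the fact that the augmentation cost is exactly one vertex per copy; once these are in place, the remaining arguments reduce to the distance bookkeeping furnished by Remark~\ref{remarkDistLexi}.
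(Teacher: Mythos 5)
Your proposal follows essentially the same route as the paper's proof: the identical three-part construction for the upper bound (a local adjacency basis in each copy, one neighbourhood-escaping vertex for each additional $\mathcal{G}$-copy inside a true twin class, and one vertex per element of a minimum $A\in\mathcal{A}$), and the same lower-bound strategy of projecting a local metric basis onto the copies, showing each restriction $W_i$ is a local adjacency generator, and extracting the two correction terms from the twin classes and the $\mathcal{R}$-pairs. The additivity issue you flag at the end is exactly the delicate point; the paper handles it by establishing three separate inequalities (one global, one over $I\cap T(G)$, one over $X_E$) and combining them, which is the same bookkeeping you propose.
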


\begin{proof}
 We will first construct a local metric generator for $G\circ\mathcal{H}$. To this end, we need to introduce some notation. Let $V(G)=\{u_1,\ldots,u_n\}$ and let $S_i$ be a local adjacency basis of $H_i$, where $i\in \{1,\dots, n\}$. For any $I_j=I\cap U_j\ne \emptyset $, we choose $u\in I_j$ and set  $I'_j=I_j-\{u\}$. Now, for every  $u_i\in I'_j\ne \emptyset $, let $v_i\in V(H_i)$ such that $S_i\subseteq N_{H_i}(v_i)$. Finally, we consider a set $A\subseteq X_E$ achieving the minimum in the definition of $\varrho(G,\mathcal{H})$ and, for each $u_i\in A$, we choose one vertex $y_i\in V(H_i)-S_i$ such that $S_i\subseteq N_{H_i}(y_i)$. 
 We claim that the set 
  $$S=\left( \bigcup_{S_i\ne \emptyset} (\{u_i\}\times S_i) \right) \cup \left( \bigcup_{I'_j\ne \emptyset }\{(u_i,v_i):\;  u_i\in I'_j  \}\right)\cup \left( \bigcup_{u_i\in A} \{(u_i,y_i)\}\right)$$
is a  local metric generator for $G\circ\mathcal{H}$. We differentiate the following four cases for two adjacent vertices  $(u_i,v),$ $(u_j,w)\in V(G\circ\mathcal{H})-S$.\\
\\Case 1. $i=j$. In this case $v\sim w$. Since  $S_i$ is a local adjacency basis of $H_i$,  there exists $x\in S_i$ such that $d_{H_i,2}(x,v)\ne d_{H_i,2}(x,w)$ and so
for $(u_i,x)\in \{u_i\}\times S_i\subset S$ we have  $d_{G\circ\mathcal{H}}((u_i,x),(u_i,v))= d_{H_i,2}(x,v)\ne d_{H_i,2}(x,w)= d_{G\circ\mathcal{H}}((u_i,x),(u_i,w))$.\\
\\Case 2. $i\ne j$, $u_i,u_j\in U_l$ and $u_i\not \in I_l $. 
For any  $y\in S_i-N_{H_i}(v)$ we have that 
 $(u_i,y)\in \{u_i\}\times S_i\subseteq S$  and $d_{G\circ\mathcal{H}}((u_i,y),(u_i,v)) =2\ne 1= d_{G\circ\mathcal{H}}((u_i,y),(u_j,w))$. 
\\
\\Case 3. $i\ne j$,  $u_i,u_j\in U_l$  and $u_i , u_j \in I_l $.
If $v=v_i$ and $w=v_j$, then $(u_i,v_i)\in S$ or $(u_j,v_j)\in S$. If $v\ne v_i$ or $w\ne v_j$ (say $v\ne v_i$) then either $S_i \subseteq N_{H_i}(v)$, in which case $d_{G\circ\mathcal{H}}((u_i,v_i),(u_i,v)) = 2\ne 1= d_{G\circ\mathcal{H}}((u_i,v_i),(u_j,w))$, or there exists $y\in S_i-N_{H_i}(v)$ such that    $(u_i,y)\in \{ u_i\}\times S_i \subseteq S$ and 
$d_{G\circ\mathcal{H}}((u_i,y),(u_i,v)) = 2\ne 1= d_{G\circ\mathcal{H}}((u_i,y),(u_j,w))$.\\
\\Case 4. $i\ne j$ and $N_G[u_i]\ne N_G[u_j]$. Notice that, in this case, $u_i\sim u_j$. If $u_i\not \in I$, then $S_i\ne \emptyset$ and there exists $y\in S_i-N_{H_i}(v)$ such that    $(u_i,y)\in \{ u_i\}\times S_i \subseteq S$ and 
$d_{G\circ\mathcal{H}}((u_i,y),(u_i,v)) = 2\ne 1= d_{G\circ\mathcal{H}}((u_i,y),(u_j,w))$. Now, assume that   $u_i,u_j  \in I$. If $u_i\in I'_l$
 or $u_j\in I'_l$ for some $l$ (say $u_i\in I'_l$), then  $d_{G\circ\mathcal{H}}((u_i,v_i),(u_i,v))=2\ne 1=d_{G\circ\mathcal{H}}((u_i,v_i),(u_j,w)) $ or there exists $y\in S_i$ such that  $d_{G\circ\mathcal{H}}((u_i,y),(u_i,v))=2\ne 1=d_{G\circ\mathcal{H}}((u_i,y),(u_j,w)) $. Finally, if $u_i,u_j\notin \bigcup I'_l$, then by the construction of $S$ there exists $u_l\in A\cup (V(G)-X_E)$ such that $d_G(u_l,u_i)\ne d_G(u_l,u_j)$. Since $u_l\in \{x: (x,y)\in S\}$, there exists $y\in V(H_l)$ such that  $d_{G\circ\mathcal{H}}((u_l,y),(u_i,v))\ne d_{G\circ\mathcal{H}}((u_l,y),(u_j,w)) $.

In conclusion, $S$ is a local metric generator for $G\circ\mathcal{H}$ and, as a result, $$\dim_l(G\circ\mathcal{H})\le |S|=\displaystyle\sum_{i=1}^{n}\adim_{l}(H_i)+
\sum_{I_j\ne \emptyset}(|I_j|-1)+\varrho(G,\mathcal{H}).$$

It remains to show that $\displaystyle\dim_l(G\circ\mathcal{H})\ge \displaystyle\sum_{i=1}^{n}\adim_{l}(H_i)+
\sum_{I_j\ne \emptyset}(|I_j|-1)+\varrho(G,\mathcal{H}).$  To this end, we take a local metric basis $W$ of 
$G\circ\mathcal{H}$ and for every $u_i\in V(G)$ we define the set $W_i=\{y: (u_i,y)\in W\}$. As for any  $u_i\in V(G)$ and two adjacent vertices $v,w\in V(H_i)$, no vertex outside  $\{u_i\}\times W_i$  distinguishes $(u_i,v)$ and $(u_i,w)$, we can conclude  that $W_i$ is a local adjacency generator for $H_i$. Hence,
\begin{equation}\label{condAll}
|W_i|\ge \adim_l(H_i),\text{ for all }i\in \{1,\dots, n\}.
\end{equation} 

Now suppose, for the purpose of contradiction, that  there exist  $u_i,u_j\in I\cap U_l$ such that $|W_i|= \adim_l(H_i)$ and $|W_j|= \adim_l(H_i)$. In such a case, there exist 
$v_i\in V(H_i)-W_i$ and $v_j\in V(H_j)-W_j$ such that $W_i\subseteq N_{H_i}(v_i)$ and $W_j\subseteq N_{H_j}(v_j)$, which is a contradiction. Hence, if $|I\cap U_l|\ge 2$, then $|\{u_i\in I\cap U_l:\; |W_i|\ge \adim_l(H_i)+1\}|\ge |I\cap U_l|-1$ and, as a consequence,
\begin{equation}\label{condTT}
\sum_{u_i\in I\cap T(G)}|W_i|\ge \sum_{u_i\in I\cap T(G)}\adim_{l}(H_i)+
\sum_{I\cap U_j\ne \emptyset}(|I\cap U_j|-1).
\end{equation} 


On the other hand, assume that $\varrho(G,\mathcal{H})\neq \emptyset$. We claim that
\begin{equation}\label{condVarrho}
\sum_{u_j\in  X_E}|W_j|\geq \sum_{u_j\in  X_E}\adim_l(H_j)+\varrho(G,\mathcal{H}).
\end{equation}

To see this, we will prove that for any pair of vertices $u_i,u_j$ satisfying $\mathcal{R}$ there exists $u_r\in X_E$ such that $|W_r|\geq \adim_l(H_r)+1$. If $|W_i|=\adim_l(H_i)+1$ or $|W_j|=\adim_l(H_j)+1$, then we are done. Suppose that $|W_i|=\adim_l(H_i)$ and $|W_j|=\adim_l(H_j)$. Since $W_i$ and $W_j$ are local adjacency bases of $H_i$ and $H_j$, respectively,  there exist $v\in V(H_i)$ and $w\in V(H_j)$ such that $\{u_i\}\times W_i\subseteq N_{\langle  \{u_i\}\times V(H_i)  \rangle}(u_i,v)$ and 
$\{u_j\}\times W_j\subseteq  N_{\langle  \{u_j\}\times V(H_j)  \rangle}(u_j,w)$. Thus, there exists $(u_r,y)\in \{u_r\}\times W_r$, $r\ne i,j$,   which distinguishes the pair $(u_i,v), (u_j,w)$, and so $d_G(u_r,u_i)\ne d_G(u_r,u_j)$. Hence, since $u_i,u_j$ satisfy $\mathcal{R}$, we can claim that $u_r\in V_E\subseteq X_E$ and so $|W_r|>0=\adim_l(H_r)$. In consequence, \eqref{condVarrho} holds.
Therefore, \eqref{condAll}, \eqref{condTT} and \eqref{condVarrho} lead to     
$$\displaystyle\dim_l(G\circ\mathcal{H})  =\sum_{i=1}^{n}|W_i|\ge \displaystyle\sum_{i=1}^{n}\adim_{l}(H_i)+
\sum_{I\cap U_j\ne \emptyset}(|I\cap U_j|-1)+\varrho(G,\mathcal{H}), 
$$
as required.
\end{proof}

From now on we proceed  to obtain some particular cases of this main result. 
 To begin with, we consider the case $\varrho(G,\mathcal{H})=0$.
 
 \begin{corollary}\label{Rocero}
 Let $G$ be a connected graph of order $n\ge 2$ and let $\mathcal{H}=\{H_1,\ldots,H_n\}$ be a family of graphs. If  for any pair of adjacent vertices $u_i,u_j\in V(G)$, not belonging to the same true twin equivalence class,  $H_i\notin \mathcal{G}$ or $H_j\notin \mathcal{G}$, or there exists $u_l\in V(G)$ such  that $H_l\notin \Phi$ and $d_G(u_l,u_i)\ne d_G(u_l,u_j)$, then $$\dim_l(G\circ\mathcal{H})=\displaystyle\sum_{i=1}^{n}\adim_{l}(H_i)+
\sum_{I\cap U_j\ne \emptyset}(|I\cap U_j|-1).$$
 \end{corollary}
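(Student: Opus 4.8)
The plan is to derive the corollary directly from Theorem~\ref{DimLocalLexicInTermsAdjaLoc} by showing that the stated hypothesis forces $\varrho(G,\mathcal{H})=0$. Since $\varrho(G,\mathcal{H})=\min_{A\in\mathcal{A}}|A|$, and the empty set lies in $\mathcal{A}$ exactly when no pair of vertices of $X_E$ satisfies the relation $\mathcal{R}$, the whole task reduces to proving that, under the hypothesis, no such pair exists. Once this is established, substituting $\varrho(G,\mathcal{H})=0$ into the formula of the theorem yields the claim, since the term $\sum_{I\cap U_j\ne\emptyset}(|I\cap U_j|-1)$ is untouched.

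First I would record a structural fact about $X_E$: any two distinct vertices of $X_E$ lie in different true twin equivalence classes. Indeed, if $u_i,u_j\in X_E\subseteq I$ were distinct members of a common class $U_l$, then both would belong to $I_l=I\cap U_l$, which would then have at least two elements, so $I_l'=I_l-\{u\}\ne\emptyset$, and the construction $X_E=I-\bigcup_{I_j'\ne\emptyset}I_j'$ would retain at most the single representative $u$, contradicting that both survive. Consequently, every adjacent pair inside $X_E$ is an adjacent pair not belonging to the same true twin equivalence class, so the hypothesis of the corollary applies to it.

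Next I would take an arbitrary pair $u_i,u_j\in X_E$ and argue it cannot satisfy $\mathcal{R}$. If $u_i\not\sim u_j$ the relation fails immediately, so assume $u_i\sim u_j$. Because $u_i,u_j\in X_E\subseteq I$ we have $H_i,H_j\in\mathcal{G}$, so the first alternative of the hypothesis ($H_i\notin\mathcal{G}$ or $H_j\notin\mathcal{G}$) is impossible; hence the second must hold, providing a vertex $u_l$ with $H_l\notin\Phi$ and $d_G(u_l,u_i)\ne d_G(u_l,u_j)$. The one point requiring care --- and the place I expect the only real friction --- is that such a $u_l$ must be chosen outside $\{u_i,u_j\}$: a choice $u_l\in\{u_i,u_j\}$ distinguishes only trivially (via the distance $0$ to itself), and this is precisely the case that the exclusion $u\in V(G)-(V_E\cup\{u_i,u_j\})$ in the definition of $\mathcal{R}$ is designed to discard, as the example of Figure~\ref{Ex notation Lexicografico} (where $\varrho=1$) illustrates. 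Taking $u_l\notin\{u_i,u_j\}$, the condition $H_l\notin\Phi$ forces $u_l\notin V_E$, so $u_l\in V(G)-(V_E\cup\{u_i,u_j\})$ witnesses $d_G(u_l,u_i)\ne d_G(u_l,u_j)$ and $\mathcal{R}$ fails for the pair.

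Having shown that no pair of vertices of $X_E$ satisfies $\mathcal{R}$, the defining condition for membership in $\mathcal{A}$ is vacuously true for $A=\emptyset$, so $\emptyset\in\mathcal{A}$ and $\varrho(G,\mathcal{H})=0$. Plugging this into Theorem~\ref{DimLocalLexicInTermsAdjaLoc} leaves exactly $\sum_{i=1}^{n}\adim_{l}(H_i)+\sum_{I\cap U_j\ne\emptyset}(|I\cap U_j|-1)$, which is the assertion. The argument is thus a short specialization of the main theorem, with the only delicate step being the correct handling of the distinguishing vertex $u_l$ relative to the $\{u_i,u_j\}$ exclusion built into $\mathcal{R}$.
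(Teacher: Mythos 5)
Your proof is correct and is precisely the derivation the paper intends: the corollary is presented without proof as the case $\varrho(G,\mathcal{H})=0$ of Theorem~\ref{DimLocalLexicInTermsAdjaLoc}, and your route --- observe that distinct vertices of $X_E$ lie in distinct true twin equivalence classes, so the hypothesis applies to every adjacent pair in $X_E$ and kills the relation $\mathcal{R}$, whence $\emptyset\in\mathcal{A}$ --- is the right way to fill that in. The point you flag as delicate is genuinely the crux, and your resolution is the only tenable one: if the witness $u_l$ were allowed to be $u_i$ or $u_j$ itself, the corollary would actually be false. Indeed, take $G\cong P_4$ with $H_2\cong H_3\cong P_3$ and $H_1\cong H_4\cong N_3$; then for the pair $u_2,u_3$ the vertex $u_l=u_2$ satisfies $H_l\notin\Phi$ and $d_G(u_l,u_2)=0\ne 1=d_G(u_l,u_3)$, so the literal hypothesis holds for every adjacent pair, yet the paper itself computes $\varrho(G,\mathcal{H})=1$ and $\dim_l(G\circ\mathcal{H})=3$ for this instance in Section~\ref{sectAdjProduct}, while the right-hand side of the corollary equals $2$. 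So the existential in the hypothesis must be read as ranging over $u_l\in V(G)-\{u_i,u_j\}$, exactly as you take it, and with that reading the remaining step ($H_l\notin\Phi$ forces $u_l\notin V_E$, hence $u_l\in V(G)-(V_E\cup\{u_i,u_j\})$ refutes $\mathcal{R}$) is airtight.
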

 
In particular, if $\mathcal{H}\cap \Phi =\emptyset$, then $\varrho(G,\mathcal{H}) =0$, and so  we can state the following result, which is a particular case of Corollary \ref{Rocero}.
 
 \begin{remark}
 For any connected graph $G$    of order $n\ge 2$ and any family   $\mathcal{H}=\{H_1,\ldots,H_n\}$   composed by non-empty graphs, 
$$\dim_l(G\circ\mathcal{H})=\displaystyle\sum_{i=1}^{n}\adim_{l}(H_i)+
\sum_{I\cap U_j\ne \emptyset}(|I\cap U_j|-1).$$
 \end{remark}

If $G\cong K_n$, then $\sum_{I\cap U_j\ne \emptyset}(|I\cap U_j|-1)=\max\{0,|I|-1\}$, $|X_E|\in\{0,1\}$, which implies that $\varrho(G,\mathcal{H})=0$, and so Theorem \ref{DimLocalLexicInTermsAdjaLoc} leads to the following.  

\begin{corollary}
For any  integer  $n \ge 2$ and any family   $\mathcal{H}=\{H_1,\ldots,H_n\}$   of graphs, 
$$\dim_l(K_n\circ\mathcal{H})=\displaystyle\sum_{i=1}^{n}\adim_{l}(H_i)+\max\{0,|I|-1\}.$$
Furthermore,   the following assertions hold for a graph $H$.
\begin{itemize}
\item If $H\in \mathcal{G}$, then $\dim_l(K_n\circ H)=n\cdot \adim_{l}(H)+n-1.$

\item If $H\notin \mathcal{G}$, then $\dim_l(K_n\circ H)=n\cdot \adim_{l}(H).$
\end{itemize}
\end{corollary}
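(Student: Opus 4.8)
The plan is to derive the corollary as a direct specialization of Theorem~\ref{DimLocalLexicInTermsAdjaLoc} to the case $G\cong K_n$, so that the work consists entirely of evaluating the three summands of the main formula for a complete graph. First I would record the structural observation that in $K_n$ every vertex satisfies $N_{K_n}[x]=V(K_n)$, so all $n$ vertices are pairwise true twins and $G$ possesses a single non-singleton true twin equivalence class $U_1=V(G)$ (here $n\ge 2$ is used to guarantee it is non-singleton). Consequently $T(G)=V(G)$, and the middle summand collapses: since $I\cap U_1=I$, the sum $\sum_{I\cap U_j\ne\emptyset}(|I\cap U_j|-1)$ equals $|I|-1$ when $I\ne\emptyset$ and is the empty sum $0$ otherwise, i.e.\ it equals $\max\{0,|I|-1\}$.

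The step I expect to require the most care is verifying that $\varrho(G,\mathcal{H})=0$, and here I would unwind the definitions of Section~\ref{sectPrelim} in order. Because $T(G)=V(G)$ we have $V(G)-T(G)=\emptyset$, hence $V_E=\emptyset$. With only the one class $U_1$, if $I\ne\emptyset$ then choosing a single $u\in I_1=I$ and setting $I'_1=I_1-\{u\}$ forces $X_E=I-I'_1=\{u\}$, whereas if $I=\emptyset$ then $X_E=\emptyset$; either way $|X_E|\le 1$. Since $X_E$ contains at most one vertex, there is no pair of distinct vertices of $X_E$ to satisfy the relation $\mathcal{R}$, so the empty set vacuously belongs to $\mathcal{A}$ and $\varrho(G,\mathcal{H})=\min_{A\in\mathcal{A}}|A|=0$. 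Substituting these three evaluations into Theorem~\ref{DimLocalLexicInTermsAdjaLoc} yields the first displayed formula.

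Finally, for the two bullet points I would pass to the standard product $K_n\circ H$, where $H_i\cong H$ for all $i$, so that $\sum_{i=1}^{n}\adim_l(H_i)=n\cdot\adim_l(H)$ and $I$ is governed solely by whether $H\in\mathcal{G}$. If $H\in\mathcal{G}$ then every $H_i\in\mathcal{G}$, giving $I=V(G)$ and $|I|=n$, whence $\max\{0,|I|-1\}=n-1$ and the formula becomes $n\cdot\adim_l(H)+n-1$; if $H\notin\mathcal{G}$ then $I=\emptyset$, so $\max\{0,|I|-1\}=0$ and the formula reduces to $n\cdot\adim_l(H)$. The only genuinely conceptual point in the whole argument is the vacuity observation that forces $\varrho=0$; everything else is bookkeeping with the definitions of $T(G)$, $I$, and $X_E$ already in place.
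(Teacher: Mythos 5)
Your proposal is correct and follows exactly the route the paper takes: it specializes Theorem~\ref{DimLocalLexicInTermsAdjaLoc} to $G\cong K_n$ by noting that all vertices form a single true twin class, so the middle sum equals $\max\{0,|I|-1\}$ and $|X_E|\le 1$ forces $\varrho(K_n,\mathcal{H})=0$. The paper compresses this into one sentence preceding the corollary; your unwinding of the definitions of $V_E$, $I'_1$, and $X_E$ supplies the same bookkeeping in full detail.
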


Notice that, in the general case,  $\sum_{I\cap U_j\ne \emptyset}(|I\cap U_j|-1)=0$ if and only if each true twin equivalence class of $G$ contains at most one vertex $u_i$ such that $H_i\in \mathcal{G}$.
Thus, we can state the following corollary.

\begin{corollary}
Let $G$ be a connected graph of order $n\ge 2$ and let $\mathcal{H}=\{H_1,\ldots,H_n\}$ be a family of graphs. Then  
$\dim_l(G\circ\mathcal{H})=\displaystyle\sum_{i=1}^{n}\adim_{l}(H_i)$ if and only if for every two adjacent vertices $u_i,u_j\in I$, not belonging to the same true twin equivalence class, there exists $u\in V(G)-(V_E\cup \{u_i,u_j\})$ such that $d_G(u,u_i)\ne d_G(u,u_j)$ and each true twin equivalence class of $G$ contains at most one vertex $u_i$ such that $H_i\in \mathcal{G}$.
\end{corollary}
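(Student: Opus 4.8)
The plan is to read the desired equality directly off the master formula of Theorem~\ref{DimLocalLexicInTermsAdjaLoc}. That formula expresses $\dim_l(G\circ\mathcal{H})$ as $\sum_{i=1}^n\adim_l(H_i)$ plus the two correction terms $\sum_{I\cap U_j\ne\emptyset}(|I\cap U_j|-1)$ and $\varrho(G,\mathcal{H})$, each of which is a nonnegative integer. Hence the equality $\dim_l(G\circ\mathcal{H})=\sum_{i=1}^n\adim_l(H_i)$ holds if and only if both correction terms vanish simultaneously. The whole argument therefore reduces to translating the single condition ``$\sum_{I\cap U_j\ne\emptyset}(|I\cap U_j|-1)=0$ and $\varrho(G,\mathcal{H})=0$'' into the two stated combinatorial requirements, handling each correction term separately.

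The first correction term is the easy half. Since every summand $|I\cap U_j|-1$ is nonnegative, the sum is zero precisely when $|I\cap U_j|\le 1$ for every non-singleton true twin class $U_j$; this is exactly the statement that each true twin equivalence class contains at most one vertex $u_i$ with $H_i\in\mathcal{G}$, which is the observation recorded in the paragraph preceding the corollary, and I would simply invoke it. The crucial consequence to carry into the next step is that, under this condition, each $I_j=I\cap U_j$ is empty or a singleton, so every $I'_j=\emptyset$, whence $X_E=I-\bigcup_{I'_j\ne\emptyset}I'_j=I$; moreover no two distinct vertices of $I$ can then lie in a common true twin class.

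For the second term I would unwind $\varrho(G,\mathcal{H})=\min_{A\in\mathcal{A}}|A|$. This minimum equals $0$ if and only if $\emptyset\in\mathcal{A}$, and by the definition of $\mathcal{A}$ the empty set belongs to $\mathcal{A}$ exactly when there is no pair $u_i,u_j\in X_E$ satisfying $\mathcal{R}$: the requirement ``some vertex of $A$ distinguishes the pair'' cannot be met by $A=\emptyset$ for an actual pair, so it holds only vacuously, that is, when no pair of $X_E$ satisfies $\mathcal{R}$. Working under the first condition, so that $X_E=I$, the relation $\mathcal{R}$ on $I$ reads: $u_i\sim u_j$ and $d_G(u,u_i)=d_G(u,u_j)$ for every $u\in V(G)-(V_E\cup\{u_i,u_j\})$. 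Negating ``some pair of $I$ satisfies $\mathcal{R}$'' then yields precisely that for every adjacent pair $u_i,u_j\in I$ there exists $u\in V(G)-(V_E\cup\{u_i,u_j\})$ with $d_G(u,u_i)\ne d_G(u,u_j)$, which is the first stated condition. The qualifier ``not belonging to the same true twin equivalence class'' is harmless: true twins $u_i,u_j$ in a common class satisfy $d_G(u,u_i)=d_G(u,u_j)$ for all $u\notin\{u_i,u_j\}$ and so could never be distinguished, but the first condition already forbids two vertices of $I$ from sharing a class, so no such pair occurs in $X_E=I$. Matching the two equivalences in both directions gives the characterization.

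The main obstacle I anticipate is bookkeeping rather than conceptual depth: I must verify that the index set $V(G)-(V_E\cup\{u_i,u_j\})$ appearing in $\mathcal{R}$ coincides with the one in the stated condition, and that the simplification $X_E=I$ is genuinely available in \emph{both} directions of the biconditional. This is indeed the case: in the forward direction the equality $\dim_l(G\circ\mathcal{H})=\sum_i\adim_l(H_i)$ already forces the twin-class term to vanish, hence forces $X_E=I$ before $\varrho$ is analysed; in the backward direction the twin-class condition is assumed outright, again giving $X_E=I$. Thus in both directions the distance condition is read against the simplified set $X_E=I$, and the argument closes without ever confronting the general shape of $X_E$.
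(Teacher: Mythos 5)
Your proposal is correct and follows essentially the same route as the paper, which leaves the proof implicit: it derives the corollary directly from Theorem~\ref{DimLocalLexicInTermsAdjaLoc} by observing that both correction terms are nonnegative, that the twin-class sum vanishes precisely when each true twin class contains at most one vertex $u_i$ with $H_i\in\mathcal{G}$ (the remark immediately preceding the corollary), and that $\varrho(G,\mathcal{H})=0$ precisely when no pair of $X_E$ satisfies $\mathcal{R}$. Your additional care in checking that $X_E=I$ in both directions, and that true-twin pairs cannot occur in $I$ under the second condition, fills in exactly the bookkeeping the paper omits.
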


A particular case of the result above is stated in the next remark. 

\begin{remark}
Let $G$ be a connected bipartite graph of order $n\ge 2$ and let $\mathcal{H}=\{H_1,\ldots,H_n\}$ be a family of graphs. If $\mathcal{H}\not\subseteq \mathcal{G}$, then $$\dim_l(G\circ\mathcal{H})=\displaystyle\sum_{i=1}^{n}\adim_{l}(H_i).$$ 
\end{remark}

\begin{corollary}
Let  $G$ be a connected bipartite graph of order $n$, let $H$ be a non-empty graph, and 
 let  $\mathcal{H}$ be  a family composed by  $n$ graphs. If  $ \mathcal{H}- \Phi=\{H\}$, then
\[\dim_l(G\circ\mathcal{H})=\left\{  \begin{array}{ll}
  \adim_l(H)+1,& \text{if  $H\in \mathcal{G}$};\\
  \\
  \adim_l(H), & \text{otherwise.}
  \end{array}
\right.
  \]
\end{corollary}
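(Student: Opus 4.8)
The plan is to apply Theorem~\ref{DimLocalLexicInTermsAdjaLoc} and evaluate each of its three summands under the present hypothesis, namely that exactly one member of $\mathcal{H}$, say $H_{i_0}=H$, is non-empty while the remaining $n-1$ members are empty graphs (I assume $n\ge 2$, as required by the theorem). Since $\adim_l(N_r)=0$ for every empty graph, the first summand collapses at once to $\sum_{i=1}^{n}\adim_l(H_i)=\adim_l(H)$. It then remains to show that $\sum_{I\cap U_j\ne\emptyset}(|I\cap U_j|-1)+\varrho(G,\mathcal{H})$ equals $1$ when $H\in\mathcal{G}$ and $0$ otherwise. The structural fact I would record first is that a connected bipartite graph has \emph{no} non-singleton true twin class unless it is $K_2$: if $x,y$ are distinct true twins then $x\sim y$, and any third neighbour of $x$ would be a common neighbour of $x$ and $y$, yielding a triangle and contradicting bipartiteness; hence $\{x,y\}$ is a $K_2$ component, which by connectedness forces $G\cong K_2$.

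For the case $H\notin\mathcal{G}$ the argument is short. Every empty graph lies in $\Phi\subset\mathcal{G}$, so the only possible member of $\mathcal{H}$ outside $\mathcal{G}$ is $H$ itself; thus $\mathcal{H}\not\subseteq\mathcal{G}$, and the preceding remark gives directly $\dim_l(G\circ\mathcal{H})=\sum_{i=1}^{n}\adim_l(H_i)=\adim_l(H)$, as claimed.

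For the case $H\in\mathcal{G}$ we have $I=V(G)$ (all empties are in $\mathcal{G}$, and so is $H$), and I would split along the structural fact. If $G\cong K_2$, the unique non-singleton true twin class is $U_1=V(G)$, contributing $|I\cap U_1|-1=1$; after removing one vertex to form $I_1'$, the set $X_E$ is a singleton, so it admits no pair and $\varrho(G,\mathcal{H})=0$, giving total $1$. If $G\not\cong K_2$, then there are no non-singleton true twin classes, the middle summand vanishes, and $X_E=I=V(G)$. Here the crux is identifying which pairs satisfy $\mathcal{R}$: since $V_E=V(G)-\{u_{i_0}\}$, for a pair $\{u_i,u_j\}$ avoiding $u_{i_0}$ the set $V(G)-(V_E\cup\{u_i,u_j\})$ reduces to $\{u_{i_0}\}$, and adjacency of $u_i,u_j$ in a bipartite graph forces $d_G(u_{i_0},u_i)\ne d_G(u_{i_0},u_j)$ by parity, so $\mathcal{R}$ fails; whereas for a pair $\{u_{i_0},u_j\}$ with $u_j\sim u_{i_0}$ that set is empty, so $\mathcal{R}$ holds. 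Hence the pairs under $\mathcal{R}$ are exactly $\{u_{i_0},u_j\}$ with $u_j$ a neighbour of $u_{i_0}$, each distinguished by $u_{i_0}$ itself (distances $0$ and $1$); thus $A=\{u_{i_0}\}\in\mathcal{A}$, while $A=\emptyset$ fails because $u_{i_0}$ has a neighbour (connectedness, $n\ge 2$), yielding $\varrho(G,\mathcal{H})=1$ and total $1$.

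The main obstacle I anticipate is the bookkeeping around $\mathcal{R}$ and $X_E$ in the case $H\in\mathcal{G}$: one must see that the single extra unit of dimension \emph{migrates} between the true-twin summand (when $G\cong K_2$) and $\varrho(G,\mathcal{H})$ (otherwise), so that the total is invariantly $1$. The parity observation for bipartite graphs is exactly what isolates the pairs incident with $u_{i_0}$ as the only ones satisfying $\mathcal{R}$; everything else is a direct substitution into Theorem~\ref{DimLocalLexicInTermsAdjaLoc}.
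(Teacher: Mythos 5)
Your proof is correct and takes essentially the same route as the paper: a case split on $G\cong K_2$ versus $G\not\cong K_2$ and on $H\in\mathcal{G}$ versus $H\notin\mathcal{G}$, evaluating the three summands of Theorem~\ref{DimLocalLexicInTermsAdjaLoc} in each case. You merely make explicit the two facts the paper leaves unstated, namely that a connected bipartite graph other than $K_2$ has no non-singleton true twin class, and that the parity of distances in a bipartite graph governs exactly which adjacent pairs satisfy $\mathcal{R}$, which is why the extra unit sits in the true-twin summand when $G\cong K_2$ and in $\varrho(G,\mathcal{H})$ otherwise.
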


\begin{proof}
If $G\cong K_2$, then $\varrho(G,\mathcal{H}) =0$, 
$\displaystyle\sum_{I\cap U_j\ne \emptyset}(|I\cap U_j|-1)=1$ whenever  $H\in \mathcal{G}$, and  $\displaystyle\sum_{I\cap U_j\ne \emptyset}(|I\cap U_j|-1)=0$
 whenever  $H\not \in \mathcal{G}$.
 On the other hand, if $G\not \cong K_2$, then $\displaystyle\sum_{I\cap U_j\ne \emptyset}(|I\cap U_j|-1)=0$,    $\varrho(G,\mathcal{H}) =1$ whenever  $H\in \mathcal{G}$, and  
 $\varrho(G,\mathcal{H}) =0$ whenever  $H\not \in \mathcal{G}$.
Since  in any case $ \displaystyle\sum_{i=1}^{n}\adim_{l}(H_i)=\adim_{l}(H)$, the result follows from Theorem \ref{DimLocalLexicInTermsAdjaLoc}.
\end{proof}

Our next result concerns the case of a family $\mathcal{H}$ composed by empty graphs. 

\begin{remark}\label{DimLocalLex=dimlocalG}
For any connected graph $G$  of order $n\ge 2$ and any family $\mathcal{H}$ composed by $n$  graphs, $$\dim_{l}(G\circ\mathcal{H})\ge\dim_l(G).$$
In particular, if $\mathcal{H}\subset \Phi$, then   $$\dim_{l}(G\circ\mathcal{H})=\dim_l(G).$$
\end{remark}

\begin{proof}
Let $W$ be a local metric basis of $G\circ\mathcal{H}$ and let $W_G=\{u:\;(u,v)\in W\}$ be the projection of $W$ onto $G$. If there exist two adjacent vertices $u_i,u_j\in V(G)-W_G$ not distinguished by any vertex in $W_G$, then no pair of vertices $(u_i,v)\in \{u_i\}\times V(H_i)$, $(u_j,w)\in \{u_j\}\times V(H_j)$ is distinguished by elements of $W$, which is a contradiction. Thus, $W_G$ is a local metric generator for $G$, so $\dim_l(G\circ\mathcal{H})=|W|\ge|W_G|\ge\dim_l(G)$.

Now, we assume that $\mathcal{H}\subset \Phi$ and proceed to show that $\dim_{l}(G\circ\mathcal{H})\le\dim_l(G).$ Let $A$ be a local metric basis of $G$.   For each $H_l\in \mathcal{H}$ we select one vertex $y_l$ and we define the set $A'=\{(u_l,y_l): u_l\in A\}$. Let $(u_i,v)$ and $(u_j,w)$ be two adjacent vertices of  $G\circ\mathcal{H}$. Since $u_i\sim u_j$, there exists $u_l\in A$ such that $d_G(u_i,u_l)\ne d_G(u_j,u_l)$. Now, if $l\ne i,j$,   then we have $d_{G\circ\mathcal{H}}((u_l,y_l),(u_i,v))=d_G(u_i,u_l)  \ne d_G(u_j,u_l)=d_{G\circ\mathcal{H}}((u_l,y_l),(u_j,w))$. If $l=i$, then $d_{G\circ\mathcal{H}}((u_l,y_l),(u_i,v))=2  \ne 1=d_{G\circ\mathcal{H}}((u_l,y_l),(u_j,w))$.
Since the case $l=j$ is analogous to the previous one, we can conclude that $A'$ is  a local metric generator for $G\circ\mathcal{H}$ and, as a consequence, $\dim_{l}(G\circ\mathcal{H})\le \dim_l(G)$. Therefore, the proof is complete.
\end{proof}

In general, the converse of Corollary \ref{DimLocalLex=dimlocalG} does not hold. For instance, we take $G$ as the graph shown in Figure \ref{figExampleNoIfandOnlyif-DimLocalLex=dimlocalG},    $H_1\cong H_5\cong K_2$ and $H_2, H_3, H_4\in \Phi$. In this case, we have that, for instance, $\{u_1,u_5\}$ is a local metric basis of $G$, whereas for any $y\in V(H_1)$ and $y'\in V(H_5)$, the set $\{(u_1,y),(u_5,y')\}$ is a local metric basis of $G\circ {\cal H}$, so  $\dim_{l}(G\circ\mathcal{H})=\dim_l(G)=2$. 

\begin{figure}[h]
\centering
\begin{tikzpicture}[transform shape, inner sep = .7mm]
\node [draw=black, shape=circle, fill=black] (v1) at (-2,0) {};
\node [draw=black, shape=circle, fill=white] (v2) at (-1,0) {};
\node [draw=black, shape=circle, fill=white] (v3) at (0,1) {};
\node [draw=black, shape=circle, fill=white] (v4) at (1,0) {};
\node [draw=black, shape=circle, fill=black] (v5) at (2,0) {};
\draw (v1)--(v2)--(v3)--(v4)--(v5);
\draw (v4)--(v2);
\node  at (-2,-0.3) {$u_1$};
\node  at (-1,-0.3) {$u_2$};
\node  at (1,-0.3) {$u_4$};
\node  at (2,-0.3) {$u_5$};
\node  at (0, 1.3) {$u_3$};
\end{tikzpicture}
\caption{The set $\{u_1, u_5\}$ is a local metric basis of this graph.}\label{figExampleNoIfandOnlyif-DimLocalLex=dimlocalG}
\end{figure}
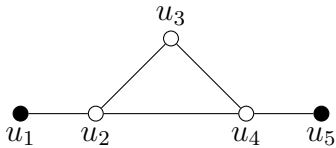

As a direct consequence of Theorems \ref{LocalAdjDimSmall-Large}   and \ref{DimLocalLexicInTermsAdjaLoc}   we deduce the following two results. 

\begin{theorem}
Let $G$ be a connected graph of order $n\ge 2$ and let $\mathcal{H}=\{H_1,\ldots,H_n\}$ be a family composed by non-empty graphs. Then 
$\dim_l(G\circ\mathcal{H})=n$ if and only if each true twin equivalence class of $G$ contains at most one vertex $u_i$ such that $H_i\in \mathcal{G}$  and each  
$H_i\in \mathcal{H}$ is a  bipartite graph having only one non-trivial connected component $H_i^*$ and $r(H_i^*)\le 2$.
\end{theorem}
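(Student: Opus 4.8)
The plan is to read off both directions directly from the formula in Theorem~\ref{DimLocalLexicInTermsAdjaLoc}, after first disposing of the term $\varrho(G,\mathcal{H})$. Since every $H_i$ is non-empty, we have $\mathcal{H}\cap\Phi=\emptyset$, and hence $\varrho(G,\mathcal{H})=0$ by the observation recorded just after Corollary~\ref{Rocero}. The formula then collapses to
$$\dim_l(G\circ\mathcal{H})=\sum_{i=1}^{n}\adim_{l}(H_i)+\sum_{I\cap U_j\ne\emptyset}(|I\cap U_j|-1).$$
Next I would establish the elementary lower bound $\adim_l(H_i)\ge 1$ for each non-empty $H_i$: the empty set fails to distinguish the endpoints of any edge, so no graph possessing an edge can have a local adjacency basis of cardinality $0$. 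Consequently $\sum_{i=1}^{n}\adim_l(H_i)\ge n$, while the second summand is manifestly non-negative, and therefore $\dim_l(G\circ\mathcal{H})\ge n$ holds unconditionally.

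From here the equality $\dim_l(G\circ\mathcal{H})=n$ is equivalent to both summands attaining their minimum simultaneously, that is, to $\adim_l(H_i)=1$ for every $i$ \emph{and} $\sum_{I\cap U_j\ne\emptyset}(|I\cap U_j|-1)=0$. For the first of these I would invoke Theorem~\ref{LocalAdjDimSmall-Large}(i), which characterises $\adim_l(H_i)=1$ precisely as $H_i$ being bipartite with a single non-trivial connected component $H_i^*$ satisfying $r(H_i^*)\le 2$; this is exactly the stated condition on the factors. For the second I would appeal to the observation already noted in the excerpt, namely that $\sum_{I\cap U_j\ne\emptyset}(|I\cap U_j|-1)=0$ if and only if each true twin equivalence class of $G$ contains at most one vertex $u_i$ with $H_i\in\mathcal{G}$. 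Combining these two equivalences yields both implications of the theorem at once.

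The main point demanding care, rather than a genuine obstacle, is the sharpness of the bound $\adim_l(H_i)\ge 1$: it is what forces each individual factor to realise the bipartite/radius condition, rather than permitting one factor's deficiency to be compensated by another's surplus within the sum. The only other subtlety is the reduction $\varrho(G,\mathcal{H})=0$, which rests squarely on the hypothesis that no factor is empty. Granting these two points, the result is a direct translation of Theorem~\ref{DimLocalLexicInTermsAdjaLoc} through the two characterisations above.
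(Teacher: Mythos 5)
Your proposal is correct and follows exactly the route the paper intends: it presents this theorem as a direct consequence of Theorems~\ref{LocalAdjDimSmall-Large} and~\ref{DimLocalLexicInTermsAdjaLoc}, with $\varrho(G,\mathcal{H})=0$ coming from $\mathcal{H}\cap\Phi=\emptyset$ and the two summands analysed separately as you do. Your explicit justification of $\adim_l(H_i)\ge 1$ and of the independence of the two minimality conditions simply fills in details the paper leaves implicit.
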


\begin{theorem}
Let $G$ be a connected true twins free graph of order $n\ge 2$ and let $\mathcal{H}=\{H_1,\ldots,H_n\}$ be a family composed by non-empty graphs of order $n_i$. Then  $\dim_l(G\circ\mathcal{H})=\displaystyle \sum_{i=1}^nn_i-n$ if and only if   $H_i\cong K_{n_i}$, for all $H_i\in \mathcal{H}$.
\end{theorem}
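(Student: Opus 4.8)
The plan is to specialize the formula of Theorem~\ref{DimLocalLexicInTermsAdjaLoc} using the two hypotheses so that its right-hand side collapses to a single sum, and then to apply the extremal characterization of Theorem~\ref{LocalAdjDimSmall-Large}(ii) coordinatewise. First I would dispose of the two correction terms. Since $G$ is true twins free, each of its true twin equivalence classes is a singleton, so the collection $\{U_1,\dots,U_k\}$ of \emph{non-singleton} classes is empty and the summand $\sum_{I\cap U_j\ne\emptyset}(|I\cap U_j|-1)$ is an empty sum, hence $0$. Because every $H_i$ is non-empty, $\mathcal{H}\cap\Phi=\emptyset$, and the observation recorded just before the remark following Corollary~\ref{Rocero} gives $\varrho(G,\mathcal{H})=0$. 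Thus Theorem~\ref{DimLocalLexicInTermsAdjaLoc} reduces to $\dim_l(G\circ\mathcal{H})=\sum_{i=1}^{n}\adim_l(H_i)$.

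The second step is a termwise estimate. For each $i$ I would combine inequality~(iv), namely $\adim_l(H_i)\le\adim(H_i)$, with the monotonicity of the metric dimension of $(V(H_i),d_{H_i,t})$ in $t$ noted after~(\ref{distinguishAdj}): since $\adim(H_i)$ equals the metric dimension of $(V(H_i),d_{H_i,2})$, which is at most the metric dimension $n_i-1$ of $(V(H_i),d_{H_i,1})$, we get $\adim_l(H_i)\le n_i-1$. Summing over $i$ yields $\dim_l(G\circ\mathcal{H})\le\sum_{i=1}^n(n_i-1)=\sum_{i=1}^n n_i-n$, with equality precisely when $\adim_l(H_i)=n_i-1$ for every $i$.

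It then remains to identify the equality case. Theorem~\ref{LocalAdjDimSmall-Large}(ii) states that $\adim_l(H_i)=n_i-1$ if and only if $H_i\cong K_{n_i}$, so applying this equivalence coordinatewise turns the equality condition of the previous step into the stated characterization. I expect no substantive obstacle, since the result is a direct specialization of two earlier theorems; the only points demanding care are the bookkeeping verifications of the first step---confirming that ``true twins free'' is precisely what annihilates the non-singleton-class sum, and that the non-emptiness of $\mathcal{H}$ is exactly the hypothesis triggering $\varrho(G,\mathcal{H})=0$ via Corollary~\ref{Rocero}---together with keeping $\adim_l$ distinct from $\adim$ when invoking the upper bound.
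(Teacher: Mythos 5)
Your proof is correct and follows exactly the route the paper intends: the theorem is stated there as a direct consequence of Theorem~\ref{DimLocalLexicInTermsAdjaLoc} (whose correction terms vanish under the true-twins-free and non-emptiness hypotheses, as you verify) combined with the extremal characterization $\adim_l(H_i)=n_i-1 \Leftrightarrow H_i\cong K_{n_i}$ from Theorem~\ref{LocalAdjDimSmall-Large}(ii). Your termwise bound $\adim_l(H_i)\le n_i-1$, needed to pass from the equality of sums to equality in each coordinate, is a valid (if slightly roundabout) derivation, and the whole argument is sound.
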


\section{The local adjacency dimension of ${H}$ versus the local metric dimension of $K_1+H$}\label{sectAdjvsK1}

From now on we denote by $\mathcal{G}'$ the set of graphs $H$ satisfying  that there exists a local metric basis of $K_1+H$ which contains the vertex of $K_1$.

\begin{proposition}\label{EquivDimAdjLocaDimLocK_1+H} Let $H$ be a  graph. Then    $H\in \mathcal{G}' $ if and only if  $H\in \mathcal{G} $.
\end{proposition}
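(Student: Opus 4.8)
The plan is to translate both class memberships into statements about a single quantity, namely the cardinalities of local metric generators of $K_1+H$, by exploiting the special role of the vertex of $K_1$, which I denote $w$. The starting observation is that $w$ is adjacent to every vertex of $H$, so $K_1+H$ has diameter at most $2$ and, recalling (\ref{distinguishAdj}), the distance function of $K_1+H$ restricted to $V(H)$ is exactly $d_{H,2}$; in particular $d_{K_1+H}(w,x)=1$ for every $x\in V(H)$. The adjacent pairs of $K_1+H$ are thus of two kinds: the pairs $(w,x)$ with $x\in V(H)$, and the pairs $(x,y)$ with $x\sim_H y$.

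The first key step is a lower-bound lemma: for any local metric generator $S$ of $K_1+H$, the set $B:=S\cap V(H)$ is a local adjacency generator of $H$. This holds because a within-$H$ adjacent pair $(x,y)$ cannot be distinguished by $w$ (both distances equal $1$), so it must be distinguished by a vertex of $B$, and distinguishing in $K_1+H$ means distinguishing in $d_{H,2}$. Consequently $|S|\ge|B|\ge\adim_l(H)$, and if $|S|=\adim_l(H)$ then necessarily $w\notin S$ and $B$ is a local adjacency basis. The second step records two constructions. If $B$ is any local adjacency generator of $H$, then $\{w\}\cup B$ is a local metric generator of $K_1+H$, since $w$ distinguishes every pair $(w,x)$ while $B$ handles the within-$H$ pairs; hence $\dim_l(K_1+H)\le\adim_l(H)+1$. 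Dually, a set $B\subseteq V(H)$ not containing $w$ is a local metric generator of $K_1+H$ if and only if $B$ is a local adjacency generator of $H$ and, in addition, for every $x\in V(H)-B$ we have $B\not\subseteq N_H(x)$; the extra condition is exactly what is needed so that each pair $(w,x)$ with $x\notin B$ is distinguished by some $s\in B$ with $s\not\sim_H x$.

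Finally I would combine these facts. Writing $a=\adim_l(H)$, a local metric generator of $K_1+H$ of size $a$ exists if and only if some local adjacency basis $B$ satisfies $B\not\subseteq N_H(x)$ for all $x\in V(H)-B$; since any $v$ with $B\subseteq N_H(v)$ automatically lies outside $B$ (open neighbourhoods exclude their centre), this condition is precisely the negation of the defining property of $\mathcal{G}$. Thus, if $H\notin\mathcal{G}$ then $\dim_l(K_1+H)=a$ and, by the lower-bound lemma, no minimum generator contains $w$, so $H\notin\mathcal{G}'$; whereas if $H\in\mathcal{G}$ then no generator of size $a$ exists, forcing $\dim_l(K_1+H)=a+1$, attained by $\{w\}\cup B$, so $H\in\mathcal{G}'$.

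The main obstacle is the careful bookkeeping in this last step rather than any hard estimate: one must keep straight that ``$B$ has a common neighbour $v$'' is equivalent to ``$B\subseteq N_H(x)$ for some $x\notin B$'', and that a size-$a$ generator avoiding $w$ is forced to be a local adjacency \emph{basis} so that the definition of $\mathcal{G}$ applies. The edge cases of empty graphs (where $a=0$ and $B=\emptyset$) should also be checked against the convention $\Phi\subset\mathcal{G}$, since there $\{w\}$ is a local metric basis of the star $K_1+H$.
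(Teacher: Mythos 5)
Your proof is correct and follows essentially the same route as the paper's: both arguments reduce the question to the dichotomy $\dim_l(K_1+H)\in\{\adim_l(H),\adim_l(H)+1\}$, using that the restriction of a local metric generator of $K_1+H$ to $V(H)$ is a local adjacency generator, that a local adjacency basis with no common neighbour is already a local metric generator of $K_1+H$, and that adjoining $w$ to any local adjacency basis always yields one. Your explicit handling of the distinction between ``$B\subseteq N_H(x)$ for some $x\in V(H)$'' and ``for some $x\in V(H)-B$'' is a correct (and welcome) clarification of a point the paper leaves implicit.
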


\begin{proof}
 Let  $H\in \mathcal{G}' $, and $B$ a  local metric basis  of $\langle u\rangle+H$ such that $u\in B$. Since $u$ does not distinguish any pair of vertices of $H$, $B-\{u\}$ is a local adjacency generator for $H$, and so $\dim_l(\langle u\rangle+H)-1\ge \adim_l(H)$. Now, if there exists a local adjacency basis $A$ of $H$ such that $A\not \subseteq N_H(v)$ for all $v\in V(H)$, then $A$ is a local metric basis of  $\langle u\rangle+H$ and so $\dim_l(\langle u\rangle+H)=\adim_l(H)$, which is a contradiction. Therefore, $H\in \mathcal{G}$.

Now, let $H\in \mathcal{G} $.
Suppose that there exists  a  local metric basis $W$ of $\langle u\rangle+H$  such that $u\not \in W$. In such a case, for every vertex  $x\in V(H)$ there exists $y\in W$ such that $y\not \in N_H(x)$, which implies that $W$ is not a local adjacency basis of $H$, as $H\in \mathcal{G} $.  Thus, since $W$ is a local adjacency generator for $H$, we conclude that  $\dim_l(\langle u\rangle+H)=|W| \ge \adim_l(H)+1$. Therefore, for any local adjacency basis $A$ of $H$, $A\cup \{u\}$ is a  local adjacency basis of $\langle u\rangle+H$.
\end{proof}

\begin{theorem}{\rm \cite{RV-F-2013}} \label{RelacAdimLocal-LocalDimK1+H}
Let $H$ be a non-empty graph. The following assertions hold.
\begin{enumerate}[{\rm (i)}]
\item  If $H\not \in \mathcal{G}'$, then 
$\adim_l( H)=  \dim_l(K_1+H).$

\item   If  $H  \in \mathcal{G}'$, then 
$\adim_l( H)= \dim_l(K_1+H)-1.$

\item If $H$ has radius $r(H)\ge 4$, then 
 $\adim_l(  H)=   \dim_l(K_1+H).$
\end{enumerate}
\end{theorem}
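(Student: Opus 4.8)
The plan is to reduce all three parts to a single structural description of the local metric generators of $K_1+H$, and then to read off the cases, using Proposition \ref{EquivDimAdjLocaDimLocK_1+H} to replace the condition $H\in\mathcal{G}'$ by the more workable condition $H\in\mathcal{G}$. Write $L:=K_1+H$ and let $u$ be the vertex of $K_1$. First I would record the metric of $L$: since $u$ is adjacent to every vertex of $H$, the graph $L$ has diameter at most $2$, so $d_L(u,x)=1$ for all $x\in V(H)$ and $d_L(x,y)=d_{H,2}(x,y)$ for $x,y\in V(H)$. Two facts follow. Because $u$ lies at distance $1$ from every vertex of $H$, it distinguishes no pair of vertices of $H$; hence an edge $\{x,y\}$ of $H$ is distinguished in $L$ exactly when some vertex of $W\cap V(H)$ distinguishes it in the sense of $d_{H,2}$. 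And a star pair $\{u,x\}$ is distinguished by $z\in W$ precisely when $z=u$, $z=x$, or $z\in V(H)$ with $z\notin N_H[x]$.

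From this I would prove the key characterization: a set $W\subseteq V(L)$ is a local metric generator for $L$ if and only if $W\cap V(H)$ is a local adjacency generator for $H$ and, whenever $u\notin W$, there is no $x\in V(H)$ with $W\subseteq N_H(x)$. Indeed, the edges of $H$ are resolved exactly when $W\cap V(H)$ is a local adjacency generator, and the analysis of the star pairs shows that $W$ fails to distinguish some $\{u,x\}$ if and only if $u\notin W$ and $W\subseteq N_H(x)$ for some $x$. An immediate consequence is the uniform lower bound $\dim_l(L)\ge\adim_l(H)$, since any local metric basis $W$ satisfies $|W|\ge|W\cap V(H)|\ge\adim_l(H)$.

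With the characterization established the three parts are short. For (i), if $H\notin\mathcal{G}'$ then $H\notin\mathcal{G}$ by Proposition \ref{EquivDimAdjLocaDimLocK_1+H}, so some local adjacency basis $B$ has $B\not\subseteq N_H(v)$ for every $v$; by the characterization $B$ is already a local metric generator for $L$ of cardinality $\adim_l(H)$, which with the uniform lower bound gives $\adim_l(H)=\dim_l(L)$. For (ii), if $H\in\mathcal{G}'=\mathcal{G}$, then for any local adjacency basis $B$ the set $B\cup\{u\}$ is a local metric generator, so $\dim_l(L)\le\adim_l(H)+1$; conversely, if $\dim_l(L)=\adim_l(H)$ then a basis $W$ of that size would force $u\notin W$ and $W$ to be a local adjacency basis, whence $W\subseteq N_H(v)$ for some $v$ because $H\in\mathcal{G}$, and then $W$ would not distinguish $\{u,v\}$, a contradiction; thus $\dim_l(L)=\adim_l(H)+1$. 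Finally (iii) is a corollary: a graph of radius at least four does not lie in $\mathcal{G}$ (as observed in Section \ref{sectPrelim}), hence not in $\mathcal{G}'$, so (i) applies.

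I expect the main obstacle to be the clean bookkeeping in the characterization, specifically verifying that $u$ never helps resolve an edge of $H$ and that the sole obstruction for a $u$-free set is the containment $W\subseteq N_H(x)$. Everything downstream is then just a combination of this characterization with Proposition \ref{EquivDimAdjLocaDimLocK_1+H} and the radius fact; note that $H$ being non-empty guarantees that $H$ has an edge, so the local adjacency generators are genuine objects, and no case analysis beyond the three above is required.
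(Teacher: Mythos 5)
Your proof is correct. Note, however, that the paper does not actually prove Theorem \ref{RelacAdimLocal-LocalDimK1+H}: it is imported verbatim from \cite{RV-F-2013}, so there is no in-paper proof to compare against. What you have written is a genuine, self-contained derivation, and its engine --- the observation that $u$ is at distance $1$ from all of $V(H)$ and hence distinguishes no pair of vertices of $H$, that the trace $W\cap V(H)$ of a local metric generator must be a local adjacency generator for $H$, and that the only obstruction for a $u$-free set is containment in some open neighbourhood $N_H(x)$ --- is exactly the reasoning the paper does use in its proof of Proposition \ref{EquivDimAdjLocaDimLocK_1+H}. Your single characterization of local metric generators of $K_1+H$ is slightly more systematic than that proof, and it cleanly yields both directions of (i) and (ii) plus the uniform bounds $\adim_l(H)\le\dim_l(K_1+H)\le\adim_l(H)+1$; part (iii) then correctly reduces to (i) via the radius observation recorded in Section \ref{sectPrelim}. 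Two small points worth making explicit if you write this up: in verifying that the edges of $H$ are resolved exactly when $W\cap V(H)$ is a local adjacency generator, you should separate the trivial case where one of the two adjacent vertices already lies in $W$ (the adjacency-generator definition only quantifies over pairs in $V-S$, which matches the metric statement via $d_{H,2}(x,x)=0$); and in (ii) your appeal to $\mathcal{G}'=\mathcal{G}$ is legitimate because the proof of Proposition \ref{EquivDimAdjLocaDimLocK_1+H} does not rely on Theorem \ref{RelacAdimLocal-LocalDimK1+H}, so no circularity arises.
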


As the following result shows, we can express all our previous results in terms of the local adjacency dimension of the graphs $K_1+H_i$, where    $H_i\in \mathcal{H}$, i.e., Theorem \ref{DimLocalLexicInTermsK_1+H} is analogous  to Theorem \ref{DimLocalLexicInTermsAdjaLoc}.

\begin{theorem}\label{DimLocalLexicInTermsK_1+H}
Let  $G$ be a connected graph     of order $n\ge 2$,  and $\mathcal{H}=\{H_1,\ldots,H_n\}$  a family   of  graphs.
 Then
$$\dim_l(G\circ\mathcal{H})=\displaystyle\sum_{i=1}^{n}\dim_{l}(K_1+H_i)-
\tau+\varrho(G,\mathcal{H}),$$
where $\tau$ is the number of   non-singleton true twin equivalence classes of $G$ having at least one vertex $u_i$ such that 
 $H_i\in \mathcal{G}' $ .
\end{theorem}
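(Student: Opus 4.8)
The plan is to read the statement off Theorem \ref{DimLocalLexicInTermsAdjaLoc} after rewriting each $\adim_l(H_i)$ through $\dim_l(K_1+H_i)$. First I would invoke Proposition \ref{EquivDimAdjLocaDimLocK_1+H}, which gives $\mathcal{G}=\mathcal{G}'$; hence the index set $I=\{u_i:\,H_i\in\mathcal{G}\}$ appearing in Theorem \ref{DimLocalLexicInTermsAdjaLoc} is identical to $\{u_i:\,H_i\in\mathcal{G}'\}$, and all the true-twin bookkeeping can be phrased in terms of $\mathcal{G}'$. Then, by Theorem \ref{RelacAdimLocal-LocalDimK1+H}, for every non-empty $H_i$ one has $\dim_l(K_1+H_i)=\adim_l(H_i)+1$ if $H_i\in\mathcal{G}'$ and $\dim_l(K_1+H_i)=\adim_l(H_i)$ otherwise. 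Since that theorem is stated only for non-empty graphs, I would dispatch the empty graphs $H_i\cong N_r$ by hand, using $\adim_l(N_r)=0$, $N_r\in\mathcal{G}'$ and $\dim_l(K_1+N_r)=\dim_l(K_{1,r})=1$; this shows the same dichotomy holds for every index $i$, and summing gives
\[
\sum_{i=1}^{n}\dim_l(K_1+H_i)=\sum_{i=1}^{n}\adim_l(H_i)+|I|.
\]

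Substituting this identity into Theorem \ref{DimLocalLexicInTermsAdjaLoc} and cancelling the common summand $\varrho(G,\mathcal{H})$, the theorem collapses to the purely combinatorial identity
\[
|I|-\sum_{I\cap U_j\ne\emptyset}\bigl(|I\cap U_j|-1\bigr)=\tau .
\]
I would establish it by partitioning $I$ over the true-twin equivalence classes of $G$. Writing $|I|=\sum_{U_j\cap I\ne\emptyset}|I\cap U_j|$ with the sum taken over \emph{all} classes, every class meeting $I$ contributes $|I\cap U_j|-(|I\cap U_j|-1)=1$ to the left-hand side, so the left-hand side equals the number of true-twin classes of $G$ that contain at least one $u_i$ with $H_i\in\mathcal{G}'$.

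The step I expect to be the crux is precisely this count. A singleton class $\{u_i\}$ with $H_i\in\mathcal{G}'$ still carries the extra $+1$ produced by $\dim_l(K_1+H_i)=\adim_l(H_i)+1$, and this $+1$ is \emph{not} reabsorbed by the second sum of Theorem \ref{DimLocalLexicInTermsAdjaLoc}, which ranges only over non-singleton classes and evaluates there to $0$. Hence the left-hand side must count singleton classes as well, so for the identity to hold $\tau$ has to be understood as the number of \emph{all} true-twin classes (singletons included) containing a vertex $u_i$ with $H_i\in\mathcal{G}'$; counting only non-singleton classes would undercount by exactly $|I\setminus T(G)|$. One sees this already on $P_3\circ\{K_2,K_2,K_2\}$, where Theorem \ref{DimLocalLexicInTermsAdjaLoc} yields $\dim_l=3$ whereas the non-singleton reading of the right-hand side gives $6$. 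With the correction term counted over all classes the identity holds and the theorem follows.
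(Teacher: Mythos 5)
Your derivation is essentially the argument the paper intends: its entire proof of Theorem \ref{DimLocalLexicInTermsK_1+H} is the remark that the result ``immediately follows'' from Proposition \ref{EquivDimAdjLocaDimLocK_1+H} and Theorems \ref{DimLocalLexicInTermsAdjaLoc} and \ref{RelacAdimLocal-LocalDimK1+H}, i.e.\ precisely the substitution $\sum_i\dim_l(K_1+H_i)=\sum_i\adim_l(H_i)+|I|$ followed by the twin-class bookkeeping that you carry out explicitly (including the separate treatment of empty $H_i$, which the paper omits even though Theorem \ref{RelacAdimLocal-LocalDimK1+H} is stated only for non-empty graphs). More importantly, your analysis of the crux is correct and exposes a genuine flaw in the statement as printed: since $|I|-\sum_{I\cap U_j\ne\emptyset}\bigl(|I\cap U_j|-1\bigr)=|I\setminus T(G)|+\#\{j:\, I\cap U_j\ne\emptyset\}$, the correction term must count \emph{every} true twin equivalence class (singletons included) containing some $u_i$ with $H_i\in\mathcal{G}'$, whereas the paper's $\tau$ ranges only over non-singleton classes. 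Your counterexample checks out: for $P_3\circ\{K_2,K_2,K_2\}$ there are no non-singleton twin classes in $P_3$, so $\tau=0$ and $\varrho=0$, and the right-hand side of Theorem \ref{DimLocalLexicInTermsK_1+H} as written evaluates to $3\dim_l(K_3)=6$, while Theorem \ref{DimLocalLexicInTermsAdjaLoc} (and a direct inspection of $K_2+(K_2\cup K_2)$) gives $\dim_l=3$; the discrepancy is exactly $|I\setminus T(G)|=3$. So your proof is correct and establishes the corrected statement; the theorem as printed fails precisely when some vertex $u_i\notin T(G)$ has $H_i\in\mathcal{G}'$, a case the paper's one-line proof glosses over.
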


\begin{proof}
 Notice that, by Proposition  \ref{EquivDimAdjLocaDimLocK_1+H}, the parameter $\varrho(G,\mathcal{H})$  can be redefined in terms of  $\mathcal{G}'$. 
The result immediately follows from 
 Proposition \ref{EquivDimAdjLocaDimLocK_1+H}  and Theorems \ref{DimLocalLexicInTermsAdjaLoc}  and  \ref{RelacAdimLocal-LocalDimK1+H}.
\end{proof}

\begin{lemma}\label{lemmaGirth}
Let $H$ be a connected  graph different from a tree. If $H\in \mathcal{G}$, then $\mathtt{g}(H)\le 6$. 
\end{lemma}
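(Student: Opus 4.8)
Suppose, for contradiction, that $H$ is connected, is not a tree, lies in $\mathcal{G}$, and has girth $g=\mathtt{g}(H)\ge 7$. Fix a shortest cycle $C=v_0v_1\cdots v_{g-1}$ of $H$; being shortest it is chordless, and $H$ contains no cycle of length less than $g$. The engine of the proof is the following observation: if $B$ is any local adjacency generator of $H$ with $B\subseteq N_H(v)$ for some vertex $v$, then every edge of $C$ has an endpoint within distance $2$ of $v$. Indeed, a vertex $b\in N_H(v)$ distinguishes an edge $xy$ in the truncated metric $d_{H,2}$ only if $d_H(b,x)\le 1$ or $d_H(b,y)\le 1$ (otherwise $d_{H,2}(b,x)=d_{H,2}(b,y)=2$), and since $d_H(v,b)=1$ this forces $d_H(v,x)\le 2$ or $d_H(v,y)\le 2$; hence no cycle edge can have both of its endpoints at distance $\ge 3$ from $v$.

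Now I would invoke the hypothesis $H\in\mathcal{G}$. Since $H$ has an edge, $\adim_l(H)\ge 1$, so I may fix a nonempty local adjacency basis $B$, and by the definition of $\mathcal{G}$ there is a vertex $v$ with $B\subseteq N_H(v)$. Writing $A_v=\{v_i\in V(C):d_H(v,v_i)\le 2\}$, the observation above says that $A_v$ meets every edge of $C$. I will show, however, that $A_v$ is always contained in a set of at most $5$ consecutive vertices of $C$. This yields the contradiction: for $g\ge 7$ the complementary arc has at least $g-5\ge 2$ consecutive vertices, so $C$ has an edge both of whose endpoints lie outside $A_v$ (that is, at distance $\ge 3$ from $v$), contradicting that $A_v$ meets every edge. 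Hence $\mathtt{g}(H)\le 6$.

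The main work is the structural claim on $A_v$, which I would prove by cases on $t=d_H(v,C)$, writing $d_C$ for distance measured along the cycle $C$ and using short-cycle arguments throughout. If $t\ge 3$ then $A_v=\emptyset$. If $t=0$, so $v=v_0\in C$, then $A_v\subseteq\{v_{g-2},v_{g-1},v_0,v_1,v_2\}$: a cycle vertex $v_i$ with $d_C(v_0,v_i)\ge 3$ and $d_H(v_0,v_i)\le 2$ would, joining an off-cycle path of length $\le 2$ to the shorter arc of $C$, close up a cycle of length at most $2+\lfloor g/2\rfloor<g$. If $t=1$, then $v$ has a unique neighbour $v_0$ on $C$ (two neighbours on $C$ would give a cycle of length at most $2+\lfloor g/2\rfloor<g$), and $A_v\subseteq\{v_{g-1},v_0,v_1\}$, since any $v_i\in A_v$ with $d_C(v_0,v_i)\ge 2$ would close up a cycle of length at most $3+\lfloor g/2\rfloor<g$. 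Finally, if $t=2$, an analogous computation shows that two distinct vertices of $A_v$ must be at distance at least $g-4$ along $C$; combined with distance at most $\lfloor g/2\rfloor$, this forces $|A_v|\le 2$, and when two vertices occur they span an arc of length $\lfloor g/2\rfloor\le 4$. In every case $A_v$ lies in at most $5$ consecutive vertices of $C$.

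The point needing care, and the step I expect to be most delicate, is to verify that each short \emph{closed walk} I construct is in fact a short \emph{cycle}, so that the comparison with the girth is legitimate. This is where chordlessness of $C$ and the location of $v$ enter: for $t=1$ and $t=2$ the internal vertices of the off-cycle paths used lie outside $C$ (a neighbour of $v$ on $C$ is either the unique $v_0$ or, when $t=2$, nonexistent), so these paths are internally disjoint from the cycle arcs, and the closed walks are genuine cycles of length strictly less than $g$. Notably, no appeal to isometry of $C$ is required; the girth bound alone drives every estimate.
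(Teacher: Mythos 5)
Your proof is correct, and it shares the paper's key first step: since a local adjacency basis $B$ satisfies $B\subseteq N_H(v)$ for some $v$, no element of $B$ is adjacent to (or equal to) any vertex at distance at least $3$ from $v$, so $H$ can have no edge with both endpoints at distance at least $3$ from $v$. After that the two arguments diverge. The paper draws the global conclusion: the levels $N_i(v)$ satisfy $N_i(v)=\emptyset$ for $i\ge 4$ and $N_3(v)$ is independent, and then asserts $\mathtt{g}(H)\le 6$ --- the step it leaves implicit being the BFS-tree argument that any non-tree edge $xy$ closes a fundamental cycle of length at most $\operatorname{depth}(x)+\operatorname{depth}(y)+1\le 3+2+1=6$, the independence of $N_3(v)$ forbidding both endpoints from lying at depth $3$. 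You instead fix a shortest (hence chordless) cycle $C$ and bound the arc of $C$ occupied by the vertices within distance $2$ of $v$, via a case analysis on $d_H(v,C)$ driven by girth comparisons. Your route is considerably longer and requires the care you flag about closed walks versus genuine cycles, which you do supply through chordlessness and the off-cycle location of the intermediate vertices; note only that the same chordlessness argument is equally needed in your $t=0$ case (to rule out the middle vertex of the length-$2$ path lying on $C$), whereas your final paragraph mentions it only for $t=1,2$. In exchange, your argument makes fully explicit the final girth step that the paper compresses into a single ``Therefore''. The paper's route is shorter and yields slightly more information (the eccentricity of $v$ is at most $3$); yours is self-contained at the cost of the case analysis.
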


\begin{proof}
Let $A$ be local adjacency basis of $H$. 
 Since $H\in \mathcal{G}$, we consider $v$ as the vertex of $H$ such that $A\subseteq N_H(v)$. Let $N_i(v)=\{u\in V(H):\; d_H(v,u)=i\}$. 
Since $A\subseteq N_1(v)$, we have that $N_3(v)$ is an independent set and $N_i(v)=\emptyset$, for all $i\ge 4$. Therefore, 
 $\mathtt{g}(H)\le 6$.
\end{proof}
By Proposition \ref{EquivDimAdjLocaDimLocK_1+H}, Theorem \ref{DimLocalLexicInTermsK_1+H}   and Lemma \ref{lemmaGirth} we can derive the following consequence of Theorem \ref{DimLocalLexicInTermsK_1+H} (or equivalently, Theorem \ref{DimLocalLexicInTermsAdjaLoc}).

\begin{corollary}\label{CoroFirstInequalityTight}
Let  $G$ be a connected graph   of order $n\ge 2$, and $\mathcal{H}=\{H_1,\ldots,H_n\}$ a family composed by connected graphs. If   each $H_i\in \mathcal{H}$  has radius $r(H_i)\ge 4$, or  $H_i$ is not  a tree and it has girth $\mathtt{g}(H_i)\ge 7$, then 
$$\dim_l(G\circ\mathcal{H})=\displaystyle\sum_{i=1}^{n}\dim_{l}(K_1+H_i)=\sum_{i=1}^{n}\adim_{l}(H_i).$$
\end{corollary}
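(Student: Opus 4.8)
Looking at this corollary, I need to prove that under the hypotheses (each $H_i$ has radius $\geq 4$, or is a non-tree with girth $\geq 7$), the local metric dimension of $G \circ \mathcal{H}$ equals both $\sum_{i=1}^n \dim_l(K_1+H_i)$ and $\sum_{i=1}^n \adim_l(H_i)$.

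Let me think about what I have available and plan the proof.

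The corollary follows from Theorem \ref{DimLocalLexicInTermsK_1+H}, which states:
$$\dim_l(G\circ\mathcal{H})=\sum_{i=1}^{n}\dim_{l}(K_1+H_i)-\tau+\varrho(G,\mathcal{H}),$$
where $\tau$ counts non-singleton true twin equivalence classes having at least one vertex $u_i$ with $H_i \in \mathcal{G}'$.

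The key insight I need: under these hypotheses, none of the $H_i$ belongs to $\mathcal{G}$ (equivalently $\mathcal{G}'$, by Proposition \ref{EquivDimAdjLocaDimLocK_1+H}). So I should show this, which will make $\tau = 0$ and force $\varrho(G,\mathcal{H}) = 0$.

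Let me verify this approach works for both equalities. The radius part: the excerpt already notes "no graph of radius greater than or equal to four belongs to $\mathcal{G}$." The girth part: this is exactly Lemma \ref{lemmaGirth}, whose contrapositive says a connected non-tree with girth $\geq 7$ is not in $\mathcal{G}$.
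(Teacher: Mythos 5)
Your proposal is correct and follows essentially the same route as the paper: the hypotheses force each $H_i\notin\mathcal{G}$ (via the radius observation from Section~\ref{sectPrelim} and the contrapositive of Lemma~\ref{lemmaGirth}), so $I=\emptyset$, hence $\tau=0$, $X_E=\emptyset$ and $\varrho(G,\mathcal{H})=0$, and the two equalities follow from Theorem~\ref{DimLocalLexicInTermsK_1+H} together with Proposition~\ref{EquivDimAdjLocaDimLocK_1+H} and Theorem~\ref{RelacAdimLocal-LocalDimK1+H}(i). The only point worth making explicit in a final write-up is the last step, that $H_i\notin\mathcal{G}'$ gives $\dim_l(K_1+H_i)=\adim_l(H_i)$.
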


\begin{proposition} {\rm \cite{RV-F-2013}}   \label{DimAdLocCycle}
For any integer $n\ge 4$,  $\adim_l(  C_n)= \left\lceil\frac{n}{4}\right\rceil$.
\end{proposition}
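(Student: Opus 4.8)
The plan is to translate the computation of $\adim_l(C_n)$ into a covering problem on the edges of the cycle. First I would fix a labelling $v_0,v_1,\dots,v_{n-1}$ of the vertices of $C_n$, with indices read modulo $n$, and write $e_i=\{v_i,v_{i+1}\}$ for its edges. Since a local adjacency generator must distinguish, under the truncated metric $d_{C_n,2}$, exactly the pairs of \emph{adjacent} vertices, the key preliminary step is to determine, for each edge $e_i$, which vertices distinguish its two endpoints. A short distance computation in the cycle, recalling that $d_{C_n,2}(x,y)=\min\{d_{C_n}(x,y),2\}$, shows that $v_j$ distinguishes the endpoints of $e_i$ precisely when $j\in\{i-1,i,i+1,i+2\}$; equivalently, each vertex $v_j$ distinguishes exactly the four consecutive edges $e_{j-2},e_{j-1},e_j,e_{j+1}$, and no others. (For $n\ge 4$ these four edges are genuinely distinct.)

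With this in hand, a set $S\subseteq V(C_n)$ is a local adjacency generator if and only if the family of length-$4$ arcs of edges associated with the vertices of $S$ covers all $n$ edges of $C_n$. I would then read off both bounds from this reformulation. For the lower bound, since there are $n$ edges and each vertex covers at most $4$ of them, any local adjacency generator has size at least $n/4$, hence at least $\lceil n/4\rceil$. For the upper bound, I would exhibit an explicit generator of this size: taking the $\lceil n/4\rceil$ vertices $v_0,v_4,v_8,\dots$ spaced four apart, their associated arcs are consecutive blocks of four edges, and since $4\lceil n/4\rceil\ge n$ these blocks cover the whole cyclic sequence of edges, with a harmless overlap when $4\nmid n$.

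The only genuinely delicate point is the distinguishing characterization in the first step, where the truncation of the metric at $2$ must be handled carefully: the vertices $v_{i-2}$ and $v_{i+3}$ are each at distance $2$ from one endpoint of $e_i$ and at distance $3$ from the other, so they fail to distinguish the edge precisely because the metric is capped at $2$. After that, the covering argument is routine, the main remaining care being to verify the wrap-around in the upper-bound construction when $n$ is not a multiple of $4$, so that the chosen arcs indeed leave no edge uncovered.
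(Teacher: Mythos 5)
Your argument is correct. Note that the paper states this proposition as a cited result from [RV-F-2013] and gives no proof of its own, so there is no in-paper argument to compare against; your reduction to covering the $n$ edges of the cycle by the arcs $\{e_{j-2},e_{j-1},e_j,e_{j+1}\}$ is a valid, self-contained verification. The one point worth being explicit about is that the formal definition only requires distinguishing adjacent pairs $x,y\in V-S$, but this matches your covering reformulation because under $d_{C_n,2}$ each vertex of $S$ distinguishes itself from its neighbours and hence covers its own two incident edges; with that observed, the characterization of which $v_j$ distinguish the endpoints of $e_i$, the counting lower bound, and the spaced construction $v_0,v_4,v_8,\dots$ all check out, including the wrap-around when $4\nmid n$.
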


From Corollary \ref{CoroFirstInequalityTight} and Proposition \ref{DimAdLocCycle} we deduce the following result. 

\begin{proposition}
Let  $G$ be a connected graph   of order $t\ge 2$, and $\mathcal{H}=\{C_{n_1},\ldots,C_{n_t}\}$ a family composed by cycles of order at least $7$. Then
$$\dim_l(G\circ\mathcal{H})=\displaystyle\sum_{i=1}^{t} \left\lceil\frac{n_i}{4}\right\rceil.$$
\end{proposition}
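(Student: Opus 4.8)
The plan is to obtain the result as an immediate application of Corollary~\ref{CoroFirstInequalityTight}, feeding in the exact value of the local adjacency dimension of a cycle supplied by Proposition~\ref{DimAdLocCycle}. Since both of these are already established, the only genuine work is to check that the family $\mathcal{H}=\{C_{n_1},\ldots,C_{n_t}\}$ satisfies the hypotheses of Corollary~\ref{CoroFirstInequalityTight}, and then to evaluate the resulting sum.

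First I would verify the structural hypotheses. Each $C_{n_i}$ is a connected graph, so $G\circ\mathcal{H}$ is connected and the corollary applies; and, being a cycle, $C_{n_i}$ is not a tree. Its girth equals its order, so $\mathtt{g}(C_{n_i})=n_i\ge 7$. Thus each $C_{n_i}$ falls under the ``not a tree with girth at least $7$'' branch of Corollary~\ref{CoroFirstInequalityTight}. It is worth noting that the \emph{radius} branch does not apply here: the radius of $C_{n_i}$ equals $\lfloor n_i/2\rfloor$, which is only $3$ when $n_i=7$, so the hypothesis $n_i\ge 7$ is exactly what is needed to invoke the girth condition rather than the radius condition.

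With the hypotheses confirmed, Corollary~\ref{CoroFirstInequalityTight} yields
$$\dim_l(G\circ\mathcal{H})=\sum_{i=1}^{t}\adim_{l}(C_{n_i}).$$
Then I would substitute the value from Proposition~\ref{DimAdLocCycle}, which is applicable since $n_i\ge 7\ge 4$, giving $\adim_l(C_{n_i})=\lceil n_i/4\rceil$ for each $i$. Summing over $i$ produces the claimed formula $\sum_{i=1}^{t}\lceil n_i/4\rceil$.

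There is no substantive obstacle to overcome: the proof is a direct composition of two previously proved statements. The only point requiring a moment of care is the one noted above, namely ensuring that the girth condition (and not the radius condition) of Corollary~\ref{CoroFirstInequalityTight} is the branch being invoked, which is precisely why the order of each cycle is assumed to be at least $7$ rather than merely at least $4$.
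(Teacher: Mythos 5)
Your proposal is correct and follows exactly the route the paper intends: the paper presents this proposition as a direct consequence of Corollary~\ref{CoroFirstInequalityTight} (invoked via the girth branch, since each $C_{n_i}$ is a non-tree of girth $n_i\ge 7$) together with Proposition~\ref{DimAdLocCycle}. Your additional observation that the radius branch would fail for $n_i=7$ is a sensible sanity check but does not change the argument.
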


\section{On the local adjacency dimension of 
lexicogra\-phic product graphs}\label{sectAdjProduct}


By a simple transformation of Theorem \ref{DimLocalLexicInTermsAdjaLoc} we obtain an analogous result on the local adjacency dimension of 
lexicographic product graphs, which we will state without proof. To this end, we consider again some of our previous notation. As above, let $\{U_1,U_2,\dots , U_k\}$ be the set of non-singleton true twin equivalence classes of a connected graph $G$ of order $n\ge 2$, and let $\mathcal{H}=\{H_1,\ldots,H_n\}$ be a family of graphs. Recall that $V_E=\left\{u_i\in V(G)-T(G) :\, H_i \in \Phi \right\}$, 
$I=\{u_i\in V(G):\, H_i\in \mathcal{G}\}$ and, for any $I_j=I\cap U_j\ne \emptyset $, we can choose some  $u\in I_j$ and set  $I'_j=I_j-\{u\}$. Moreover, recall that $X_E=I-\bigcup_{I_j'\ne \emptyset}I_j'$. Now, we say that two vertices $u_i,u_j \in X_E$ satisfy the relation $\mathcal{R}'$ if and only if $u_i \sim u_j$ and $d_{G,2}(u,u_i)=d_{G,2}(u,u_j)$ for all $u\in V(G)-(V_E \cup \{u_i,u_j\})$. We define $\mathcal{A}'$ as the family of sets $A\subseteq X_E$  such that for every pair of vertices $u_i,u_j\in X_E$ satisfying $\mathcal{R}'$ there exists a vertex in $A$ that distinguishes them. Finally, we define $\varrho'(G,\mathcal{H})=\displaystyle\min_{A\in \mathcal{A}'}\left\{|A|\right\}.$

\begin{theorem}\label{AdjDimLocalLexicInTermsAdjaLoc}
Let  $G$ be a connected graph     of order $n\ge 2$, let $\{U_1,U_2,\dots , U_k\}$ be the set of non-singleton true twin equivalence classes of $G$ and let $\mathcal{H}=\{H_1,\ldots,H_n\}$ be a family     of graphs. 
Then
$$\adim_l(G\circ\mathcal{H})=\displaystyle\sum_{i=1}^{n}\adim_{l}(H_i)+
\sum_{I\cap U_j\ne \emptyset}(|I\cap U_j|-1)+\varrho'(G,\mathcal{H}).$$
\end{theorem}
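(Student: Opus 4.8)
The plan is to realize the local adjacency dimension as a local metric dimension for a suitable truncated metric, and then transport the entire argument of Theorem \ref{DimLocalLexicInTermsAdjaLoc} almost verbatim, replacing $d_G$ by $d_{G,2}$ throughout. First I would recall the foundational fact, already implicit in the discussion of $d_{G,t}$ in the introduction, that a set is a local adjacency generator for a graph $F$ if and only if it is a local metric generator for the metric space $(V(F),d_{F,2})$; hence $\adim_l(F)$ is exactly the local metric dimension computed with $d_{F,2}$. Applying this with $F=G\circ\mathcal{H}$ reduces the theorem to computing $\dim_l$ of $G\circ\mathcal{H}$ under the metric $d_{G\circ\mathcal{H},2}$.

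The key computation is to express $d_{G\circ\mathcal{H},2}$ in terms of the factors. Truncating the formula of Remark \ref{remarkDistLexi} at $2$ gives, for $i\ne j$, $d_{G\circ\mathcal{H},2}((u_i,b),(u_j,d))=\min\{d_G(u_i,u_j),2\}=d_{G,2}(u_i,u_j)$, whereas for $i=j$ it equals $\min\{d_{H_i,2}(b,d),2\}=d_{H_i,2}(b,d)$, since $d_{H_i,2}$ is already bounded by $2$. Thus $d_{G\circ\mathcal{H},2}$ has exactly the same shape as $d_{G\circ\mathcal{H}}$, except that the inter-copy distance $d_G(u_i,u_j)$ is everywhere replaced by its truncation $d_{G,2}(u_i,u_j)$. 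I would then observe that all the combinatorial data entering the formula, namely the true twin classes $U_j$ together with the sets $T(G)$, $V_E$, $I$, $I'_j$ and $X_E$, depend only on the adjacency structure of $G$ and on membership in $\Phi$ and $\mathcal{G}$, not on the metric, and so remain unchanged. The sole metric-dependent ingredient is the relation $\mathcal{R}$, which is defined through $d_G$; replacing $d_G$ by $d_{G,2}$ turns it into precisely the relation $\mathcal{R}'$, and correspondingly turns $\varrho(G,\mathcal{H})$ into $\varrho'(G,\mathcal{H})$.

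With this dictionary in hand, I would re-run both halves of the proof of Theorem \ref{DimLocalLexicInTermsAdjaLoc}. For the upper bound, the same candidate set $S$ is built from local adjacency bases $S_i$ of the $H_i$, the twin-representative vertices $v_i$, and a minimizing set $A\in\mathcal{A}'$. Cases 1, 2 and 3 of the verification involve only whether two vertices lie at distance $1$ or $2$, a distinction untouched by capping at $2$, so they go through word for word. In Case 4 the inequality $d_G(u_l,u_i)\ne d_G(u_l,u_j)$ is replaced by $d_{G,2}(u_l,u_i)\ne d_{G,2}(u_l,u_j)$, which is exactly what a witness for $\mathcal{R}'$ supplies, and the distinguishing vertex $(u_l,y)$ works because $d_{G\circ\mathcal{H},2}((u_l,y),(u_i,v))=d_{G,2}(u_l,u_i)$. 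The lower bound transfers identically: each projection $W_i=\{y:(u_i,y)\in W\}$ must be a local adjacency generator for $H_i$, so $|W_i|\ge\adim_l(H_i)$; the twin argument forces the $\sum_{I\cap U_j\ne\emptyset}(|I\cap U_j|-1)$ surplus; and each pair satisfying $\mathcal{R}'$ forces one extra vertex, now witnessed with $d_{G,2}$, yielding the $\varrho'(G,\mathcal{H})$ surplus.

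The only point that requires genuine checking, and which I expect to be the main (though modest) obstacle, is the faithfulness of the translation under truncation. Concretely, one must confirm that true twins of $G$ stay indistinguishable by third vertices in the truncated metric; this holds because $N_G[u_i]=N_G[u_j]$ forces $d_{G,2}(u_i,u_k)=d_{G,2}(u_j,u_k)$ for every $u_k\ne u_i,u_j$, since third vertices are adjacent to $u_i$ exactly when they are adjacent to $u_j$, and all remaining truncated distances equal $2$. One must also check that no distinguishing power is created or destroyed in Cases 1--3 by capping distances at $2$, which is immediate as those cases only ever compare the values $1$ and $2$. Once these verifications are in place, every step of the original proof applies under the substitutions $d_G\to d_{G,2}$, $\mathcal{R}\to\mathcal{R}'$ and $\varrho\to\varrho'$, which yields the stated formula.
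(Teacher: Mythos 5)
Your proposal is correct and coincides with the paper's intended argument: the paper states this theorem without proof, remarking only that it follows ``by a simple transformation'' of Theorem~\ref{DimLocalLexicInTermsAdjaLoc}, and your write-up is exactly that transformation made explicit. The two substantive checks you isolate --- that $d_{G\circ\mathcal{H},2}$ is obtained from Remark~\ref{remarkDistLexi} by replacing $d_G$ with $d_{G,2}$, and that only the relation $\mathcal{R}$ (hence $\varrho$) is metric-dependent while Cases 1--3 compare only the values $1$ and $2$ --- are precisely the points that make the substitution $d_G\to d_{G,2}$, $\mathcal{R}\to\mathcal{R}'$, $\varrho\to\varrho'$ legitimate.
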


Let $G\cong P_4$ where $V(P_4)=\{u_1,u_2,u_3,u_4\}$ and $u_i\sim u_{i+1}$, for $i\in \{1,2,3\}$. If $H_1\cong H_2\cong H_4 \cong P_3$ and  $H_3\cong N_3$, then  $\dim_l(G\circ\mathcal{H})=3 <4=\adim_{l}(G\circ\mathcal{H})$. Notice that $\varrho(G,\mathcal{H})=0$ and $\varrho'(G,\mathcal{H})=1$. However, 
if $ H_2\cong H_3 \cong P_3$ and  $H_1\cong H_4\cong N_3$, then $\varrho(G,\mathcal{H})=\varrho'(G,\mathcal{H})=1$ and $\dim_l(G\circ\mathcal{H})=3 =\adim_{l}(G\circ\mathcal{H})$.

We already know that for any graph $G$ of diameter less than or equal to two, $\dim_l(G)=\adim_l(G)$. However, the previous example shows that 
 the above mentioned equality is not restrictive to graphs of diameter at most two, as $D(G\circ\mathcal{H})=D(P_4)=3$.

Notice that $\varrho'(G,\mathcal{H})\ge\varrho(G,\mathcal{H})$, which is a direct consequence of Theorems~\ref{DimLocalLexicInTermsAdjaLoc} and~\ref{AdjDimLocalLexicInTermsAdjaLoc}, as well as the fact that $\adim_l(G)\ge\dim_l(G)$ for any graph $G$. The next result corresponds to the case $\varrho(G,\mathcal{H})=\varrho'(G,\mathcal{H})$.

\begin{theorem}\label{FangoIguales}
Let  $G$ be a connected graph     of order $n\ge 2$,  and $\mathcal{H}=\{H_1,\ldots,H_n\}$ a family     of  graphs.  Then
$\dim_l(G\circ\mathcal{H})=\adim_l(G\circ\mathcal{H})$ if and only if $\varrho(G,\mathcal{H})=\varrho'(G,\mathcal{H})$.
\end{theorem}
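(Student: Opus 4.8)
The plan is to prove Theorem~\ref{FangoIguales} by directly comparing the two formulas furnished by Theorems~\ref{DimLocalLexicInTermsAdjaLoc} and~\ref{AdjDimLocalLexicInTermsAdjaLoc}. Observe that these two theorems give
\[
\dim_l(G\circ\mathcal{H})=\sum_{i=1}^{n}\adim_{l}(H_i)+\sum_{I\cap U_j\ne \emptyset}(|I\cap U_j|-1)+\varrho(G,\mathcal{H})
\]
and
\[
\adim_l(G\circ\mathcal{H})=\sum_{i=1}^{n}\adim_{l}(H_i)+\sum_{I\cap U_j\ne \emptyset}(|I\cap U_j|-1)+\varrho'(G,\mathcal{H}).
\]
The first two summands are \emph{identical} in both expressions: the terms $\sum_{i=1}^{n}\adim_{l}(H_i)$ and $\sum_{I\cap U_j\ne\emptyset}(|I\cap U_j|-1)$ depend only on the local adjacency dimensions of the $H_i$ and on the true twin structure of $G$, neither of which changes between the two parameters. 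Subtracting, I would obtain immediately
\[
\adim_l(G\circ\mathcal{H})-\dim_l(G\circ\mathcal{H})=\varrho'(G,\mathcal{H})-\varrho(G,\mathcal{H}).
\]

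From this identity the equivalence is essentially immediate. First I would argue the \emph{if} direction: if $\varrho(G,\mathcal{H})=\varrho'(G,\mathcal{H})$, then the right-hand side of the displayed difference vanishes, so $\dim_l(G\circ\mathcal{H})=\adim_l(G\circ\mathcal{H})$. Conversely, for the \emph{only if} direction, if $\dim_l(G\circ\mathcal{H})=\adim_l(G\circ\mathcal{H})$, then the difference is zero, which forces $\varrho'(G,\mathcal{H})=\varrho(G,\mathcal{H})$. Both directions are just cancellation in a single equation, so the entire proof reduces to writing down the two formulas and subtracting.

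Strictly speaking there is essentially no obstacle here, since the hard work has already been carried out in establishing Theorems~\ref{DimLocalLexicInTermsAdjaLoc} and~\ref{AdjDimLocalLexicInTermsAdjaLoc}; the present statement is a bookkeeping consequence. The only point deserving a word of care is the justification that $\varrho'(G,\mathcal{H})\ge\varrho(G,\mathcal{H})$ always holds, which the excerpt already records as a consequence of the two main theorems together with $\adim_l\ge\dim_l$. This monotonicity is what makes the difference $\varrho'-\varrho$ nonnegative, so that the equality of the two dimensions genuinely corresponds to the equality of the two $\varrho$ parameters rather than to some coincidental cancellation against the shared summands. I would therefore mention this inequality in passing, but the core of the argument is simply the subtraction of the two formulas, and I would keep the proof to a couple of lines.
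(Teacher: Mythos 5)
Your proposal is correct and matches the paper's (implicit) reasoning exactly: the paper states this theorem without proof, treating it as an immediate consequence of subtracting the formulas of Theorems~\ref{DimLocalLexicInTermsAdjaLoc} and~\ref{AdjDimLocalLexicInTermsAdjaLoc}, whose first two summands coincide. The remark about $\varrho'(G,\mathcal{H})\ge\varrho(G,\mathcal{H})$ is not needed for the equivalence itself, but including it does no harm.
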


We now characterize the case $\varrho(G,\mathcal{H})=\varrho'(G,\mathcal{H})=0$.  The symmetric difference of two sets $U$ and $W$ will be denoted by $U \triangledown W$. 

\begin{theorem}\label{lexiSame_local-Metric_Adj_dim}
Let $G$ be a connected graph of order $n\ge 2$ and let $\mathcal{H}=\{H_1,\ldots,H_n\}$ be a family of graphs. Then the following assertions are equivalent.
\begin{enumerate}[{\rm (i)}]
\item $\dim_l(G\circ\mathcal{H})=\adim_{l}(G\circ\mathcal{H})=\displaystyle\sum_{i=1}^{n}\adim_{l}(H_i)+
\sum_{I\cap U_j\ne \emptyset}(|I\cap U_j|-1).$
\item For any pair of adjacent vertices $u_i,u_j\in V(G)$, not belonging to the same true twin equivalence class,  $H_i\notin \mathcal{G}$ or $H_j\notin \mathcal{G}$, or there exists $u_l\in N_G(u_i)\triangledown N_G(u_j)$ where $H_l$ is not empty.
\end{enumerate}
\end{theorem}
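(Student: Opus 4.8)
The plan is to prove the equivalence of assertions (i) and (ii) in Theorem \ref{lexiSame_local-Metric_Adj_dim} by reducing it to results already established. The key observation is that assertion (i) holds if and only if $\varrho(G,\mathcal{H})=\varrho'(G,\mathcal{H})=0$: by Theorem \ref{DimLocalLexicInTermsAdjaLoc} and Theorem \ref{AdjDimLocalLexicInTermsAdjaLoc}, the common value $\sum_{i=1}^n\adim_l(H_i)+\sum_{I\cap U_j\ne\emptyset}(|I\cap U_j|-1)$ is achieved by $\dim_l(G\circ\mathcal{H})$ exactly when $\varrho(G,\mathcal{H})=0$ and by $\adim_l(G\circ\mathcal{H})$ exactly when $\varrho'(G,\mathcal{H})=0$. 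Since $\varrho'(G,\mathcal{H})\ge\varrho(G,\mathcal{H})\ge 0$, demanding both equalities is equivalent to demanding $\varrho'(G,\mathcal{H})=0$ alone, which by Theorem \ref{FangoIguales} already forces the local metric and local adjacency dimensions to coincide. Thus the whole theorem collapses to a combinatorial characterization of when $\varrho'(G,\mathcal{H})=0$.

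Next I would unfold the definition of $\varrho'(G,\mathcal{H})$. By definition $\varrho'(G,\mathcal{H})=0$ means the empty set lies in $\mathcal{A}'$, which happens precisely when \emph{no} pair of vertices $u_i,u_j\in X_E$ satisfies the relation $\mathcal{R}'$ (an empty set of constraints is vacuously satisfied). So I must show that the nonexistence of a pair in $X_E$ satisfying $\mathcal{R}'$ is equivalent to condition (ii). Recall $\mathcal{R}'$ requires $u_i\sim u_j$ together with $d_{G,2}(u,u_i)=d_{G,2}(u,u_j)$ for all $u\in V(G)-(V_E\cup\{u_i,u_j\})$, and that $u_i,u_j\in X_E\subseteq I$ means both $H_i,H_j\in\mathcal{G}$ and the two vertices lie in distinct true twin classes (or the class is handled by the $I_j'$ pruning). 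The translation to distances is the heart of the matter: the condition $d_{G,2}(u,u_i)=d_{G,2}(u,u_j)$ fails for some admissible $u$ exactly when there is a vertex adjacent to precisely one of $u_i,u_j$, i.e.\ a vertex $u_l\in N_G(u_i)\triangledown N_G(u_j)$, and the clause ``$u\notin V_E$'' translates to demanding that such a distinguishing $u_l$ can be chosen with $H_l$ non-empty.

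The forward direction runs as follows: if (ii) holds, take any candidate pair $u_i,u_j\in X_E$ with $u_i\sim u_j$ in distinct true twin classes. Since $u_i,u_j\in I$ both $H_i,H_j\in\mathcal{G}$, so the first disjunct of (ii) fails and we must be in the second disjunct, giving $u_l\in N_G(u_i)\triangledown N_G(u_j)$ with $H_l$ non-empty, hence $u_l\notin V_E$. A vertex in the symmetric difference of neighbourhoods distinguishes the pair under $d_{G,2}$ (it sits at distance $1$ from one and distance $\ge 2$ from the other), so $\mathcal{R}'$ fails and no pair satisfies it, giving $\varrho'(G,\mathcal{H})=0$. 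For the converse I would argue contrapositively: if (ii) fails, there exist adjacent $u_i,u_j$ in distinct true twin classes with $H_i,H_j\in\mathcal{G}$ such that every vertex in $N_G(u_i)\triangledown N_G(u_j)$ has an empty corresponding graph, which places all distinguishing vertices inside $V_E$; one then checks these two vertices lie in $X_E$ (using the $X_E$ definition and that they are in $I$ but in singleton-or-pruned twin classes) and satisfy $\mathcal{R}'$, forcing $\varrho'(G,\mathcal{H})\ge 1$. The main obstacle I anticipate is precisely this bookkeeping in the converse: verifying that the offending pair genuinely survives into $X_E$ rather than being removed by the $\bigcup_{I_j'\ne\emptyset}I_j'$ pruning, and handling the edge case where $u_i$ and $u_j$ might share a true twin class—this requires carefully invoking the hypothesis that they belong to \emph{different} true twin equivalence classes, which is exactly what rules out the degenerate situation.
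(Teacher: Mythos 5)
Your proposal follows essentially the same route as the paper's proof: both reduce the statement, via Theorems \ref{DimLocalLexicInTermsAdjaLoc} and \ref{AdjDimLocalLexicInTermsAdjaLoc} together with $0\le\varrho(G,\mathcal{H})\le\varrho'(G,\mathcal{H})$, to showing that $\varrho'(G,\mathcal{H})=0$ is equivalent to condition (ii), and then unfold the definition of $\mathcal{R}'$ exactly as the paper does (a vertex $u$ with $d_{G,2}(u,u_i)\ne d_{G,2}(u,u_j)$ for adjacent $u_i,u_j$ is precisely a vertex of $N_G(u_i)\triangledown N_G(u_j)$). The one caveat, which your converse shares verbatim with the paper's forward implication, is the identification of ``$u_l\notin V_E$'' with ``$H_l$ non-empty'': since $V_E$ by definition excludes $T(G)$, a distinguishing vertex $u_l\in T(G)$ with $H_l\in\Phi$ lies outside $V_E$ while $H_l$ is empty, so this step (and hence the bookkeeping you flag at the end) deserves more care than either argument gives it.
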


\begin{proof}
By Theorems \ref{DimLocalLexicInTermsAdjaLoc}, \ref{AdjDimLocalLexicInTermsAdjaLoc} and \ref{FangoIguales}, we only need to show that $\varrho'(G,\mathcal{H})=0$ if and only if  (ii) holds.

\noindent 
$((i)\Rightarrow (ii))$ If  $\varrho'(G,\mathcal{H})=0$, then for every two adjacent vertices $u_i,u_j\in I$, not belonging to the same true twin equivalence class, there exists $u_l\in V(G)-(V_E\cup \{u_i,u_j\})$ such that $d_{G,2}(u_l,u_i)\ne d_{G,2}(u_l,u_j)$, which implies that   $u_l\in N_G(u_i)\triangledown N_G(u_j)$ and  $H_l$ is not empty. Now, if $u_i,u_j\not \in I$,  then   $H_i\notin \mathcal{G}$ or $H_j\notin \mathcal{G}$.\\

\noindent
$((ii)\Rightarrow (i))$ If for any pair of adjacent vertices $u_i,u_j\in V(G)$, not belonging to the same true twin equivalence class,  $H_i\notin \mathcal{G}$ or $H_j\notin \mathcal{G}$, or there exists $u_l\in N_G(u_i)\triangledown N_G(u_j)$ where $H_l$ is not empty, then no pair of adjacent vertices satisfy $\mathcal{R}'$ and $V(G)-X_E$ is a local adjacency generator for $G$, which implies that  $\varrho'(G,\mathcal{H})=0$.
\end{proof}

\end{document}